\numberwithin{equation}{section}
\newcommand{\one}{\mathbbm{1}}
\newcommand{\be}{\begin{equation}}
\newcommand{\ee}{\end{equation}}
\newcommand{\ds}{\displaystyle}
\newcommand{\iden}{\mathbbm{1}}
\newcommand{\Z}{\mathbb{Z}}
\newcommand{\X}[1]{(X_{#1})_{#1 \in \mathbb{Z}_{\geq 0}}}
\newcommand{\Xu}[1]{(X^{\text{(u)}}_{#1})_{#1 \in \mathbb{Z}_{\geq 0}}}
\newcommand{\ann}{\mathrm{LAnn}}
\newcommand{\ind}{\mathrm{Ind}}
\newcommand{\GL}{\mathrm{GL}}
\newcommand{\matt}[4]{\begin{pmatrix}
#1 & #2 \\ #3 & #4 \end{pmatrix}}
\newcommand{\Irr}{\mathrm{Irr}}
\newcommand{\Stab}{\mathrm{LStab}}
\newcommand{\mring}[1]{M_2(\mathbb F_#1)}
\newcommand\tss[2][.5ex]{%
  \def\stacktype{S}%
  \belowbaseline[#1]{\scriptsize#2}%
}
\newcommand{\rhoq}{\rho\tss[-0.2ex]{$\one_q$}}
\newtheorem{thm}{Theorem}[section]
\newtheorem{prop}[thm]{Proposition}
\newtheorem{cor}[thm]{Corollary}
\newtheorem{Lemma}[thm]{Lemma}
\newtheorem{defn}[thm]{Definition}
\newtheorem{Remark}[thm]{Remark}
\newtheorem{example}[thm]{Example}
\title[Random motion on finite noncommutative rings]{Random motion on finite rings, II: \\ Noncommutative rings}
\author{Arvind Ayyer}
\address{AA: Department of Mathematics, 
Indian Institute of Science, Bangalore  560012, India.}
\email{arvind@iisc.ac.in}
\author{Pooja Singla}
\address{PS: Department of Mathematics, 
Indian Institute of Science, Bangalore  560012, India.}
\email{pooja@iisc.ac.in}
\date{\today}
\begin{document}

\begin{abstract}
We extend our previous study of Markov chains on finite commutative rings 
(\href{https://arxiv.org/abs/1807.04082}{\texttt{arXiv:1605.05089}}) to arbitrary finite rings with identity. At each step, we either add or multiply by a randomly chosen element of the ring, where the addition (resp. multiplication) distribution is uniform (resp. conjugacy invariant).
We prove explicit formulas for some of the eigenvalues of the transition matrix and give lower bounds on their multiplicities. 
We also give recursive formulas for the stationary distribution and prove that the mixing time is bounded by an absolute constant. 
For the matrix rings $\mring q,$ we compute the entire spectrum explicitly using the representation theory of $\GL_2(\mathbb F_q),$ as well as the stationary probabilities. 

\end{abstract}

\subjclass[2010]{20C05, 20C15, 16P10, 16W22, 60J10}
\keywords{finite rings, Markov chains, semigroup algebras, group representations, spectrum, stationary distribution, mixing time}

\maketitle

\section{Introduction}

Random walks on finite noncommutative groups have been a subject of extensive study. The most famous examples are perhaps those concerning random walks on the symmetric group $S_n$ under the label of {\em card-shuffling problems}. Starting with the random-transpositions model~\cite{diaconis-shahshahani-1981}, a lot of progress has been made in understanding the mixing times for such random walks. In particular, most of these shuffling algorithms satisfy the so-called cutoff phenomenon. See~\cite{diaconis-1998} for a survey of card-shuffling random walks.

In a completely different direction, the problem of efficient generation of quasi-random integers led to the development of random walks with fast mixing properties on $\Z_n$~\cite{chung-diaconis-graham-1987,hildebrand-1993}. For these walks, the fact that $\Z_n$ has the structure of a ring is crucial; both  additive and multiplicative operations were used to define the random walks. These ideas were extended to random walks on the vector spaces $\Z_p^d$~\cite{asci-2009a,asci-2009b,hildebrand-mccollum-2008}. In an earlier work, we have studied a general class of random walks on finite commutative rings~\cite{AS-2016}. This has since been extended to random walks on modules of finite commutative rings~\cite{ayyer-steinberg-2017}.

In this work, we combine these two threads of ideas and consider random walks on finite rings generalising our work on finite commutative rings.
The setup is as follows. Let $R$ be a finite ring with identity. We define two probability distributions on $R.$ The first, $U$ is the uniform distribution. For the second, we need some notation. We say that two elements $a,b \in R$ belong to the same {\em similarity class} if there exists a left-invertible element $u \in R$ such that $b = u \, a \, u^{-1}.$ One also says that in such a case $a$ and $b$
are {\em similar} or {\em conjugate}. We remark that as $R$ is a finite ring, an element is left invertible if and only if it is invertible on both sides. Let $Q$ be any distribution on $R$ which is constant on similarity classes.
Then define a discrete-time Markov chain $\X t$ on $R$ as follows. At each step, toss an independent coin with Heads probability $\alpha \in (0,1).$ If the coin lands Heads, set $X_{t+1} = X_t + Y$ where $Y$ is chosen independently according to $U,$ and otherwise, set $X_{t+1} = X_t \cdot Z,$ where $Z$ is chosen independently according to $Q.$ Note that if $R$ is commutative, all similarity classes are singletons and there is no restriction on the distribution $Q.$ 

Let the transition matrix of $\X t$ on $R$ be denoted $M_R.$ In other words, $M_R$ is the matrix indexed by the elements of $R$ with $M_R(a,b)$ is the one-step transition probability of getting from $a$ to $b.$
Let $\iden_{|R|}$ be the column vector of length $|R|$ with all ones.
Then, we can write
\begin{equation}
\label{transition matrix}
M_R = \frac{\alpha }{|R|} \iden_{|R|} \iden_{|R|}^{\text{tr}} + (1-\alpha) B_R,
\end{equation}
where $B_R$ encodes only multiplicative transitions given by
\begin{equation}
\label{mult part of transition matrix}
B_R(a,b) = \sum_{\substack{x \in R \\ xa = b}} Q(x).
\end{equation}

It then follows that if the eigenvalues of $B_R$ are given by 
$\lambda_1 = 1, \lambda_2,\dots,\lambda_{|R|},$ the eigenvalues of $M_R$ are given by $\lambda_1 = 1, (1 - \alpha) \lambda_2,\dots,(1 - \alpha) \lambda_{|R|}$; see, for example, \cite[Corollary 3.1]{ding-zhou-2007}.

The plan of the rest of the paper is as follows. We prove a formula for some of the eigenvalues and their multiplicities of the transition matrix $M_R$ of the chain $\X t$ in Section~\ref{sec:spectrum}. 
We will show that our results allow us to compute the entire spectrum of $M_R$ for $R = \mring q$ and certain subrings.
We note that we are not able to describe all eigenvalues in general using our techniques. 
However, we have considered many classes of finite rings and in all small examples, we are able to determine all eigenvalues using ad-hoc methods.

Since all entries of $M_R$ are positive, it immediately follows that $\X t$ is irreducible and aperiodic, and hence has a unique stationary distribution.
We will give a recursive formula for the stationary distribution in Section~\ref{sec:statdist}. We will then show that the chain mixes in constant time in Section~\ref{sec:mixing}.
Lastly, we will consider the example of the matrix ring $\mring q$ in detail in Section~\ref{sec:gl2}. In particular, we will consider the ``uniform multiplication'' chain $\Xu t,$ which is similar in definition to $\X t,$ except that both $Y$ and $Z$ are chosen according to $U$ at each step. We will then compute the exact stationary distribution for $\Xu t$ in Section~\ref{sec:m2 statdist}.

\section{Spectrum}
\label{sec:spectrum}
Let $R$ be a finite ring with identity. The set of invertible elements of $R$ forms a group, which we denote by $U_R.$
For $a \in R,$ let $I_a$ denote the principal left ideal generated by $a.$
Let $\phi$ be a fixed set of generators of distinct principal ideals of $R.$ Let $S_a$ be the set of all elements of $I_a$ that generate $I_a$ as a left ideal.
For $a \in R,$ let $\ann(a) = \{ x \in R \mid xa = 0 \}$ be the left annihilator of $a$ and 
$\Stab(a) = \{ x \in U_R \mid xa = a \}$ be the set
 of elements of $U_R$ that fix $a$ by acting from the left. Then $\Stab(a)$ is a subgroup of $U_R$ for all $a \in R.$ The set of equivalence classes of finite dimensional complex irreducible representations of $U_R$ is denoted by $\Irr(U_R).$ The one-dimensional trivial representation of a group $G$ is denoted by $\one_G.$ For $a \in \phi,$ we denote $\Sigma_a$ for the set of all inequivalent irreducible representations of $U_R$ that are constituents of $\ind_{\Stab(a)}^{U_R}(\one_{\Stab(a)}).$ In other words, $\Sigma_a$ can be written as
\begin{equation}
\label{def-Sigma}
\Sigma_a = \{ \rho \in \Irr(U_R) \mid \mathrm{Hom}_{U_R} ( \rho, \ind_{\Stab(a)}^{U_R}(\one_{\Stab(a)}) ) \neq 0  \}.
\end{equation} 
A $U_R$ representation $V$ is called {\em multiplicity-free} if $V$ decomposes as a 
direct sum of inequivalent irreducible representations each occurring with multiplicity at most one. 

\begin{defn} 
An element $a \in R \setminus U_R$ is called a {\em multiplicity-free non-unit} if  $\ind_{\Stab(a)}^{U_R}(\one_{\Stab(a)}) )$ is multiplicity-free as a $U_R$ representation. 
\end{defn}

The zero element of any ring is always a multiplicity-free non-unit of the ring. See Lemmas~\ref{lem:mfnu-commutative} and \ref{lem:induction-from-mirabolic} and the discussion following those for more examples of multiplicity-free non-units. The group $U_R$ acts on $R$ by conjugation. 
For any $r \in R,$ we call the set $C_r = \{ u r u^{-1} \mid u \in U_R \}$ as the {\em similarity class} of $r.$ If $r \in U_R,$ we will also use the term conjugacy class for $C_r.$
For any subset $S$ of $R,$ 
let $C_r S = \{ ts \mid t \in C_r \text{ and } s \in S \} .$ 
  Let $\psi$ be a fixed set of similarity class representatives of $R$ and $\psi^\times$ be a subset of $\psi$ consisting of similarity class representatives of $U_R.$ 
For $a \in \phi,$ we define the set $F_a$ as follows. 
 \begin{equation}
 \label{definition-of-Fa}
  F_a = \{ x \in \psi \mid (C_x S_a) \cap S_a \neq \emptyset \}.
 \end{equation}

Let $\mathbb{C}[R]$ be the complex vector space obtained by considering the formal basis $\{ e_r \mid r \in R \}.$ Further, define multiplication on basis elements of $\mathbb C[R]$ by the following. 
 \[
 e_r . e_s = e_{rs},\,\, \mathrm{for\,\,all} \,\,r, s \in R. 
 \]
By extending this multiplication linearly to all elements of $\mathbb C[R]$ we obtain that $\mathbb C[R]$ is a finite dimensional associative algebra which is not necessarily commutative. Every element of $R$ belongs to some principal ideal, and therefore $R = \cup_{a \in \phi} I_a.$ Further it is easy to see that $S_a = I_a \setminus \sum_{I_b \subsetneq I_a} I_b.$ This gives the decomposition
 \begin{equation}
 \mathbb C[R] = \underset{a \in \phi}{\oplus} \mathbb C[S_a],
 \end{equation}
where $\mathbb C[S_a]$ is a subspace of $\mathbb C[R]$ with formal basis $\{e_r \mid r \in S_a  \}.$ Similarly the spaces $\mathbb C[I_a]$ and $\mathbb C[U_R]$ are defined, where the latter coincides with the usual group algebra of $U_R.$ 
We have the following relation between $U_R$ and the elements of $S_a$

\begin{Lemma}[{\cite{steinberg-MO-2017}, 
\cite[Appendix A]{ayyer-steinberg-2017}}]
\label{lem:generation of Sa}
For any $x, y \in S_a,$ there exists $u \in U_R$ such that $u x = y .$ 
\end{Lemma}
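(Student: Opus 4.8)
The plan is to establish the equivalent assertion that $U_R$ acts transitively on $S_a$ by left multiplication, and I would actually prove a slightly more flexible statement by induction on $|R|$: \emph{if $M$ is a cyclic left $R$-module and $x,y\in M$ each generate $M$, then $ux=y$ for some $u\in U_R$}; the lemma is the case $M=I_a$. That left multiplication by $U_R$ permutes the set of generators of $M$ is immediate, since $c\mapsto cu$ permutes $R$ for $u\in U_R$, so $R(ux)=(Ru)x=Rx=M$.

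Fix generators $x,y$ of $M$ and write $y=rx$, $x=sy$ with $r,s\in R$; then $(sr)x=x$ and $(rs)y=y$. As $R$ is finite, its multiplicative monoid is finite, so a sufficiently large positive exponent $N$ makes both $e:=(sr)^{N}$ and $f:=(rs)^{N}$ idempotent, and then $ex=x$ and $fy=y$. The key point is that $e$ and $f$ are equivalent idempotents: taking $a:=s$ and $b:=(rs)^{N-1}r$ and using the identity $s(rs)^{N-1}=(sr)^{N-1}s$, one gets $ab=(sr)^{N}=e$ and $ba=(rs)^{N}=f$. At this point I invoke finiteness in an essential way: since ${}_{R}R$ has finite length, direct summands cancel (Krull--Schmidt), so from $Re\cong Rf$ (a standard consequence of the equivalence just exhibited) together with $Re\oplus R(1-e)\cong R\cong Rf\oplus R(1-f)$ one obtains $R(1-e)\cong R(1-f)$; splicing the two isomorphisms yields a left $R$-module automorphism of ${}_{R}R$ carrying $Re$ onto $Rf$, and since such automorphisms are right multiplications by units of $R$, this produces $u_{1}\in U_{R}$ with $u_{1}eu_{1}^{-1}=f$. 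I expect this step — that equivalent idempotents of a finite ring are conjugate — to be the main obstacle, as it is the only place where finiteness is genuinely used.

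With $u_{1}$ in hand, replace $x$ by $x_{1}:=u_{1}x$, which is still a generator of $M$, and observe $fx_{1}=u_{1}(ex)=x_{1}$. If $f=1$, then $(rs)^{N}=1$, so $rs$ is invertible, hence $r$ is invertible (one-sided inverses are two-sided in a finite ring), and $u:=r$ already satisfies $ux=y$. If $f\neq1$, then $fRf$ is a finite ring with identity $f$ that is strictly smaller than $R$ (as $1\notin fRf$), and $fM$ is a cyclic left $fRf$-module generated by both $x_{1}$ and $y$, since $fRf\cdot x_{1}=f(Rx_{1})=fM=f(Ry)=fRf\cdot y$ using $fx_{1}=x_{1}$ and $fy=y$. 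The induction hypothesis applied to $fRf$ then produces $v\in U_{fRf}$ with $vx_{1}=y$, and $u:=(v+1-f)u_{1}$ is a unit of $R$ (with inverse $u_{1}^{-1}(v^{-1}+1-f)$) satisfying $ux=(v+1-f)x_{1}=vx_{1}+x_{1}-fx_{1}=y$, closing the induction.

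For completeness I record a shorter route that avoids the idempotent bookkeeping: writing $y=rx$, the left $R$-linear surjection $c\mapsto cx$ from $R$ onto $I_{a}$ with kernel $\ann(x)$ restricts to a surjection $Rr\to I_{a}$ because $Rrx=Ry=I_{a}$, so $Rr+\ann(x)=R$; since a finite ring is semilocal and therefore has stable range one, there exists $\ell\in\ann(x)$ with $r+\ell\in U_{R}$, and then $(r+\ell)x=rx=y$. Either argument can be made self-contained, and I would select whichever fits the surrounding exposition more naturally.
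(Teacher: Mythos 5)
The paper does not prove this lemma; it cites it from \cite{steinberg-MO-2017} and \cite[Appendix A]{ayyer-steinberg-2017}, so there is no in-paper proof to compare against. Both of your arguments are correct. Your ``shorter route'' is essentially the standard one and almost certainly what the cited sources do: from $Rx=Ry=I_a$ and $rx=y$ one gets $Rr+\ann(x)=R$, and since a finite ring is semilocal (Artinian, so $R/J(R)$ is semisimple) it has stable range one by Bass's theorem, yielding $\ell\in\ann(x)$ with $r+\ell\in U_R$ and hence $(r+\ell)x=y$. Your first argument is a genuinely different, self-contained alternative: write $y=rx$, $x=sy$, pass to idempotent powers $e=(sr)^N$, $f=(rs)^N$, show $e$ and $f$ are equivalent idempotents, deduce they are conjugate via Krull--Schmidt cancellation on ${}_RR$ (this is the one step that really uses finiteness, through finite length of ${}_RR$; it can fail for infinite rings), and then recurse into the strictly smaller corner ring $fRf$. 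The induction is well-founded since $|fRf|<|R|$ when $f\neq1$, the terminating case $f=1$ hands you the unit directly, and $f=0$ forces $M=0$. What the idempotent route buys you is independence from the stable-range machinery, at the cost of more bookkeeping; what the stable-range route buys is brevity, at the cost of outsourcing to a nontrivial classical theorem. Either is a legitimate replacement for the citation.
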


In particular, Lemma~\ref{lem:generation of Sa} implies that the group $U_R$ acts transitively on $S_a$ by permuting its elements.  Therefore the set $S_a$ is in bijective correspondence with the set of left coset representatives of $U_R/\Stab(a)$ via $u \mapsto ua.$ This bijection preserves the $U_R$-action. Therefore as representations of $U_R,$ we have $\mathbb C[S_a] \cong  \ind_{\Stab(a)}^{U_R} (\one_{\Stab(a)}).$

 Given a representation $\rho: U_R \rightarrow \mathrm{GL}(V)$ of $U_R,$ we define a representation $\tilde{\rho}:\mathbb C[U_R] \rightarrow \mathrm{End}(V)$ of group algebra $\mathbb C[U_R]$ by $\tilde{\rho}\left( \sum_g a_g e_g \right) = \sum_g a_g \rho(g).$ This endows $V$ with a $\mathbb C[U_R]$-module structure. It is easy to see that $\rho$ is an irreducible representation of $U_R$ if and only if $V$ is an irreducible $\mathbb C[U_R]$-module under the above defined action of $\mathbb C[U_R].$ 
We are now in a position to state our result regarding the spectrum of the transition matrix $B_R.$

\begin{thm}
\label{thm:spectrum-general} 
Let $\phi' = \{ a \in \phi \mid a \in U_R \text{ or $a$ is a multiplicity-free non-unit}  \}.$ For every $a \in \phi',$ the following results hold.

\begin{itemize}
\item For every $x \in F_a,$ there exists a class function $\mathcal{F}_{x,a} \in \mathbb C[U_R]$ such that  for every $\rho \in \Sigma_a,$ we obtain an eigenvalue $\lambda_{\rho}$ of $B_R$ given by, 
\[
\lambda_{\rho}^a = \sum_{x \in F_a} Q(x) \frac{\mathrm{Tr}(\tilde \rho(\mathcal{F}_{x,a}))}{\dim(\rho)}.
\]

\item 
For every $\rho \in \Sigma_a$ the algebraic multiplicity, $m(\lambda_\rho^a)$ of $\lambda_{\rho}^a$ for $\rho \in \Sigma_a,$ satisfies
\[
m(\lambda_\rho^a) \geq  \sum_{\{ b \in \phi' \mid \lambda_\rho^a = \lambda_\rho^b \}} \dim(\rho),
\]
for any multiplcity free non-unit $a \in \phi$ and $m(\lambda_\rho^a) \geq \dim(\rho)^2$ for an invertible $a \in \phi.$
\end{itemize}

\end{thm}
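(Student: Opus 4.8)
The starting point is to realise $B_R$ — or, more precisely, its transpose, which has the same characteristic polynomial — as the operator $L_{\mathbf q}$ of left multiplication by $\mathbf q:=\sum_{x\in R}Q(x)e_x$ on $\mathbb C[R]$: indeed the matrix of $L_{\mathbf q}$ in the basis $\{e_r:r\in R\}$ has $(b,a)$-entry $\sum_{x:\,xa=b}Q(x)=B_R(a,b)$. So it suffices to determine the spectrum, with algebraic multiplicities, of $L_{\mathbf q}$ on $\mathbb C[R]=\oplus_{a\in\phi}\mathbb C[S_a]$. Order $\phi$ by any linear extension of the partial order $a\preceq b\iff I_a\subseteq I_b$. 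Since each $I_b$ is a left ideal, $L_{\mathbf q}$ preserves $\mathbb C[I_b]=\oplus_{a\preceq b}\mathbb C[S_a]$; hence in the corresponding block decomposition $L_{\mathbf q}$ is block lower triangular, and using the identity $S_b=I_b\setminus\sum_{I_a\subsetneq I_b}I_a$ its $b$-th diagonal block is exactly the induced operator $\bar L^{(b)}_{\mathbf q}$ on $\mathbb C[S_b]\cong\mathbb C[I_b]/\mathbb C\bigl[\sum_{I_a\subsetneq I_b}I_a\bigr]$. Therefore the characteristic polynomial of $B_R$ factors as $\prod_{b\in\phi}\det\bigl(t-\bar L^{(b)}_{\mathbf q}\bigr)$, so the eigenvalues of $B_R$ are the union over $b\in\phi$ of those of $\bar L^{(b)}_{\mathbf q}$, with algebraic multiplicities adding.

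Next I would use that $Q$ is constant on similarity classes, which is exactly the statement that $\mathbf q$ commutes with $e_u$ for every $u\in U_R$; passing to the subquotient, this makes $\bar L^{(a)}_{\mathbf q}$ a $U_R$-endomorphism of $\mathbb C[S_a]\cong\ind_{\Stab(a)}^{U_R}(\one_{\Stab(a)})$. If $a\in\phi$ is a multiplicity-free non-unit, then $\mathrm{End}_{U_R}(\mathbb C[S_a])\cong\oplus_{\rho\in\Sigma_a}\mathbb C$ is commutative, so $\bar L^{(a)}_{\mathbf q}$ acts on each isotypic component $V_\rho$, which has dimension $\dim\rho$ since the multiplicity is one, by a scalar $\lambda_\rho^a$; hence $\lambda_\rho^a$ is an eigenvalue of $\bar L^{(a)}_{\mathbf q}$, and therefore of $B_R$, of multiplicity $\geq\dim\rho$. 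If instead $a\in\phi$ is a unit, then $I_a=R$, $\Stab(a)=\{1\}$, $S_a=U_R$, and $\bar L^{(a)}_{\mathbf q}$ is left multiplication on $\mathbb C[U_R]$ by the central element $\sum_{x\in U_R}Q(x)e_x$; it thus acts on the $\rho$-isotypic component of the regular representation, of dimension $\dim(\rho)^2$, by the scalar $\mathrm{Tr}(\tilde\rho(\sum_{x\in U_R}Q(x)e_x))/\dim\rho$. The multiplicity assertions now follow, since each diagonal block $b\in\phi'$ on which $\lambda_\rho^a=\lambda_\rho^b$ contributes at least $\dim\rho$ (and exactly $\dim(\rho)^2$ when $b$ is a unit), and contributions from distinct blocks add.

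It remains to identify $\lambda_\rho^a$ in the stated form when $a$ is a multiplicity-free non-unit; the unit case is the displayed formula with $F_a=\psi^\times$ and $\mathcal F_{x,a}=\sum_{y\in C_x}e_y$. Since $\bar L^{(a)}_{\mathbf q}$ is scalar on $V_\rho$ and $e_a$ is a cyclic vector whose projection to $V_\rho$ is nonzero for $\rho\in\Sigma_a$, the scalar $\lambda_\rho^a$ can be read off from $\bar L^{(a)}_{\mathbf q}(e_a)=\sum_{x\in R:\,xa\in S_a}Q(x)\,e_{xa}$ by pairing with that projection. Using $\{x\in R:xa\in S_a\}=U_R+\ann(a)$ and grouping the summand $Q(x)$ by the similarity class of $x$, a class $C_x$ contributes iff $C_x\cap(U_R+\ann(a))\neq\emptyset$, which one checks is equivalent to $(C_xS_a)\cap S_a\neq\emptyset$, i.e.\ to $x\in F_a$; the contribution of the class of $x$ is carried by the group-algebra element $\tfrac1{|\Stab(a)|}\sum_{g\in U_R}\#\{z\in\ann(a):g+z\in C_x\}\,e_g$, and by Schur orthogonality the resulting function of $\rho$ equals $\mathrm{Tr}(\tilde\rho(\mathcal F_{x,a}))/\dim\rho$ for a suitable class function $\mathcal F_{x,a}\in\mathbb C[U_R]$ independent of $\rho$. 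The main obstacle is precisely this last packaging step: one must verify that the per-similarity-class contribution to the $V_\rho$-scalar is realised through a single class function $\mathcal F_{x,a}$ with exactly the normalisation $\mathrm{Tr}(\tilde\rho(\mathcal F_{x,a}))/\dim\rho$, which demands careful bookkeeping of $U_R$-conjugacy, the left annihilator $\ann(a)$, and $\Stab(a)$; the remainder of the argument is structural.
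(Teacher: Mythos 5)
Your proposal follows essentially the same route as the paper: realise $B_R$ (up to transpose) as left multiplication by $\sum_{x\in R}Q(x)e_x$ on $\mathbb C[R]$, block-triangularise with respect to the poset of principal left ideals, use conjugacy-invariance of $Q$ to get $U_R$-equivariance of the diagonal blocks on $\mathbb C[S_a]\cong\ind_{\Stab(a)}^{U_R}(\one_{\Stab(a)})$, and then apply Schur's lemma in the multiplicity-free case (resp.\ the regular representation structure in the unit case) to get scalar eigenvalues with the stated $\dim(\rho)$ (resp.\ $\dim(\rho)^2$) multiplicities, with contributions from distinct blocks adding. The paper secures the existence of the class functions $\mathcal F_{x,a}$ by simply invoking that the centre of $\mathbb C[U_R]$ surjects onto scalars on any collection of pairwise inequivalent irreducibles, which is exactly the step you flag as the ``main obstacle''; the concrete $e_a$-pairing computation you sketch is not needed (and your proposed group-algebra element $\tfrac1{|\Stab(a)|}\sum_g\#\{z\in\ann(a):g+z\in C_x\}e_g$ is not itself a class function in general), but since you do fall back on the same abstract existence argument, the proposal is correct.
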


\begin{proof} 
 It is well known that eigenvalues of $B_R$ are the same as that of the operator of the semigroup algebra $\mathbb C[R]$ obtained by multiplying on the left by $\sum_{x \in R} Q(x) x$ (see \cite[Section 7]{brown-2000}, for example). Further, by the definition of $Q(x),$ we have $\sum_{x \in R} Q(x) x = \sum_{x \in \psi} Q(x) \left( \sum_{v \in C_x}v \right).$ For $x \in \psi,$ define $\mathcal{H}_x := \sum_{v \in C_x} v$ and consider it as an operator on $\mathbb C[R]$ by left-multiplication. 
We restrict the action of $\mathcal{H}_x$ to $\mathbb C[I_a]$ and also consider the action of $\mathcal{H}_x$ on $\mathbb C[I_a] \setminus  \sum_{I_b \subsetneq I_a} \mathbb C[I_b].$
 We observe that the natural mapping of generators to generators give isomorphisms $\mathbb C[I_a] \setminus  \sum_{I_b \subsetneq I_a} \mathbb C[I_b] \cong \mathbb{C}[S_a]$ and $\mathbb C[I_a] \cong \mathbb{C}[S_a] \oplus \sum_{I_b \subsetneq I_a} \mathbb C[I_b] $ as $U_R$-spaces. We use these isomorphisms to define an action of $\mathcal H_x$ on $\mathbb C[S_a]$ for every $a \in \phi$ and denote this by $\mathcal{H}_{x,a}.$ We note that $\mathcal{H}_{x} u' = u' \mathcal{H}_{x} $ for all $u' \in U_R$ and therefore, the action of  $\mathcal{H}_{x,a}$ on $\mathbb C[S_a]$ is $U_R$-linear.

We define a partial order on $\phi$ by $a \leq b$ if and only if $I_b \subseteq I_a.$
For each $a \in \phi,$ we fix an ordered basis $\mathcal{B}_a$ of $\mathbb C[S_a].$ We also fix an ordered basis for $\mathbb C[R]$ where elements of $\mathcal{B}_b$ appear before $\mathcal{B}_a$ if $a \leq b.$ We note that for every $a \in \phi$ and every $z \in \mathbb C[S_a],$ we have $\sum_{x \in \psi } \mathcal{H}_x z =  \mathcal{H}'_{x,a} z +\omega$ for some $\mathcal{H}'_{x,a} \in \mathbb C[U_R]$ and $\omega \in \sum_{I_b \subsetneq I_a} \mathbb C[I_b].$ Thus, the matrix of $\mathcal{H}_x$ in the above basis is block upper-triangular. Our goal is to show that for $a \in \phi'$ there exists a choice of basis of $\mathcal{B}_a$  such that the diagonal blocks are actually diagonal with the desired entries. We consider the cases of $a$ is a unit and $a$ is a multiplicity-free non-unit separately.

\noindent { \bf Case~1: $a$ is a unit. } 
Here, $F_a = \psi^\times$ and therefore $\sum_{x \in \psi } \mathcal{H}_x z = \mathcal{F}_{x,a} (z) +\omega,$ where $\mathcal{F}_{x,a} = \sum_{x \in \psi^\times} Q(x)\left( \sum_{v \in C_x}v \right) \in \mathbb C[U_R]$ is a class function and $\omega \in \sum_{I_b \subsetneq I_a} \mathbb C[I_b].$ Being a class function,  $\mathcal{F}_{x,a} $ acts as a scalar on each irreducible constituent of $\mathbb C[S_a]$ and the result follows in this case. Furthermore, in this case $a$ is a unit so $\mathbb C[S_a] \cong \mathbb C[U_R]$ is the regular representation of
$U_R$ so each eigenvalue $\lambda_\rho^a$ occurs with multiplicity greater than equal to $\dim(\rho)^2.$ 
	 
\noindent { \bf Case~2: $a$ is a multiplicity-free non-unit. } 
For this case, there exists a decomposition of $\mathbb C[S_a] \cong V_1 \oplus V_2 \oplus \cdots \oplus V_t$ into $U_R$-irreducible constituents such that $V_i \ncong V_j.$ Let $d_i = \dim(V_i)$ for all $1 \leq i \leq t.$ Then due to $U_R$-linearity we have $\mathcal{H}_{x,a}(V_i ) \subseteq V_i$ for all $1 \leq i \leq m$ and by Schur's lemma, the action of $\mathcal{H}_{x,a}$ on each $V_i$ is multiplication by a scalar. Therefore, by abstract representation theory, there exists a class function $\mathcal{F}_{x,a} \in \mathbb C[U_R]$ such that its action coincides with that of $\mathcal{H}_{x,a}$ on $\mathbb C[S_a].$ Consider an ordered basis $\mathcal{B}_a = \{ v_{1,1}, \cdots v_{1,d_1}, v_{2,1}, v_{2,2}, \ldots, v_{t,d_t}  \}  $ of $\mathbb C[S_a]$ such that $v_{i,j} \in V_i$ for $1 \leq j \leq d_j.$ The function $\mathcal{F}_{x, a} \in \mathbb C[U_R]$ is a class function. 
Hence, the matrix of $\mathcal{F}_{x, a}$ with respect to $\mathcal{B}_a,$ as an endomorphism of $\mathbb C[S_a],$ is a diagonal matrix.
On each constituent $\rho \in \Sigma_a$ of  $\ind_{\Stab(a)}^{U_R}(\one_{\Stab(a)}),$ $\mathcal{F}_{x, a}$ acts as multiplication by 
the scalar $ \frac{\mathrm{Tr}(\tilde \rho(\mathcal{F}_{x,a}))}{\dim(\rho)}.$ 
Therefore, this scalar is an eigenvalue of $\mathcal{F}_{x, a}$
with multiplicity equal to $\dim(\rho).$  
Combining this result with the fact that $\sum_{x \in R} Q(x) = \sum_{x \in \psi} Q(x) \mathcal{H}_x,$ we obtain our result. 
\end{proof} 

\begin{Remark} 
\label{rem:phi=phi'}
For any ring that satisfies $\phi' = \phi,$ Theorem~\ref{thm:spectrum-general} gives the complete spectrum of $B_R.$  
\end{Remark}

\begin{Remark} 
For $a=0 \in \phi',$ we obtain the eigenvalue $1$ of $B_R.$
Since $|U_R \cap \phi'| = 1,$ we also obtain $|U_R|$-many eigenvalues of $B_R$ counted with multiplicity by Theorem~\ref{thm:spectrum-general}. 
Further, every other $a \in \phi'$ will give more eigenvalues of $B_R.$ 
\end{Remark} 

The following result is an easy application of Frobenius reciprocity and the fact that every irreducible representation of an abelian group is one-dimensional.

\begin{Lemma}
\label{lem:mfnu-commutative}
Let $G$ be a finite abelian group and $H$ be a subgroup of $G.$ Then the representation $\mathrm{Ind}_H^G(\one_H)$ is multiplicity-free. 
\end{Lemma}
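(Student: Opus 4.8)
The plan is to use Frobenius reciprocity together with the fact that every irreducible representation of a finite abelian group is one-dimensional. Since $G$ is abelian, $\Irr(G)$ consists entirely of $1$-dimensional characters $\chi: G \to \mathbb{C}^\times$. To show $\mathrm{Ind}_H^G(\one_H)$ is multiplicity-free, I need to show that for every $\chi \in \Irr(G)$, the multiplicity $\langle \chi, \mathrm{Ind}_H^G(\one_H) \rangle_G$ is at most $1$.

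The key step is the computation: by Frobenius reciprocity,
\[
\langle \chi, \mathrm{Ind}_H^G(\one_H) \rangle_G = \langle \mathrm{Res}_H^G \chi, \one_H \rangle_H.
\]
Now $\mathrm{Res}_H^G \chi$ is a $1$-dimensional representation of $H$, since $\chi$ itself is $1$-dimensional (this is where abelianness of $G$ enters, guaranteeing $\dim \chi = 1$). A $1$-dimensional representation of $H$ is irreducible, so it either equals $\one_H$ or is not isomorphic to $\one_H$; in either case the multiplicity $\langle \mathrm{Res}_H^G \chi, \one_H \rangle_H \in \{0, 1\}$. Therefore every irreducible constituent of $\mathrm{Ind}_H^G(\one_H)$ appears with multiplicity at most $1$, which is precisely the statement that $\mathrm{Ind}_H^G(\one_H)$ is multiplicity-free.

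There is essentially no obstacle here; the proof is a two-line application of standard character theory. The only point requiring the abelian hypothesis is that it forces $\dim \chi = 1$ for all $\chi \in \Irr(G)$, so that the restriction $\mathrm{Res}_H^G \chi$ remains irreducible (being $1$-dimensional) and the inner product $\langle \mathrm{Res}_H^G \chi, \one_H \rangle_H$ is a $0/1$ quantity rather than a possibly larger integer. I would present the argument in exactly this order: recall that $G$ abelian means all irreducibles are linear characters, invoke Frobenius reciprocity to rewrite the multiplicity as $\langle \mathrm{Res}_H^G \chi, \one_H \rangle_H$, observe that this is $0$ or $1$ since $\mathrm{Res}_H^G \chi$ is an irreducible (one-dimensional) $H$-representation, and conclude.
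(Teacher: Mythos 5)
Your argument is correct and is precisely the one the paper indicates: the paper does not write out a formal proof but states that the lemma ``is an easy application of Frobenius reciprocity and the fact that every irreducible representation of an abelian group is one-dimensional,'' which is exactly what you carry out. Your write-up simply makes the two-line reduction explicit, so there is nothing to add.
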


Lemma~\ref{lem:mfnu-commutative} implies that $\phi' = \phi$ for finite commutative rings. Therefore, all eigenvalues of $B_R$ can be computed by Theorem~\ref{thm:spectrum-general} and Remark~\ref{rem:phi=phi'}. This result was obtained in an earlier work~\cite{AS-2016}. 

\begin{Lemma}
\label{lem:induction-from-mirabolic}
 Let $G = \GL_n(\mathbb F_q)$ and $P $ be the subgroup of $G$ consisting of
  all matrices with last row $(0, 0, \cdots, 0, 1).$ Let $H$ be a subgroup of $G$ conjugate to $P.$ Then $\mathrm{Ind}_{H}^G(\one_{H})$ is multiplicity-free. 
\end{Lemma}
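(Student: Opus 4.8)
The plan is to reduce to $H=P$, identify $\mathrm{Ind}_P^G(\one_P)$ with a concrete permutation module, and then verify multiplicity-freeness by comparing the dimension of its endomorphism algebra with its number of distinct irreducible constituents, both computed by orbit counts after a decomposition by central characters.

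First I would note that conjugate subgroups induce isomorphic representations, so it suffices to treat $H=P$; and if $n=1$ then $G=\mathbb F_q^\times$ is abelian and the claim follows from Lemma~\ref{lem:mfnu-commutative}, so assume $n\ge 2$. Let $G$ act on $(\mathbb F_q^n)^*$ in the standard way; writing functionals as row vectors, the condition ``last row equals $e_n^{\mathrm{tr}}$'' says precisely that $g$ stabilizes the functional $e_n^*=(0,\dots,0,1)$, so $P=\mathrm{Stab}_G(e_n^*)$. Since $G$ is transitive on nonzero functionals, $\mathrm{Ind}_P^G(\one_P)\cong\mathbb C[X]$ as $G$-representations, where $X=(\mathbb F_q^n)^*\setminus\{0\}$; and $\mathbb C[X]$ is multiplicity-free if and only if $\dim_{\mathbb C}\mathrm{End}_G(\mathbb C[X])$ equals the number of distinct irreducibles occurring in it.

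Next, by Mackey's theorem $\dim_{\mathbb C}\mathrm{End}_G(\mathbb C[X])$ equals the number of $G$-orbits on $X\times X$: a pair $(\ell_1,\ell_2)$ of nonzero functionals is either of the form $\ell_2=\lambda\ell_1$ for a fixed $\lambda\in\mathbb F_q^\times$ (giving $q-1$ distinct orbits), or linearly independent (a single orbit, using $n\ge 2$), so this dimension is $q$. Now let $Z=\mathbb F_q^\times\cdot I\le Z(G)$; since $Z$ acts freely on $X$, the isotypic decomposition $\mathbb C[X]=\bigoplus_{\chi\in\widehat{\mathbb F_q^\times}}M_\chi$ for the $Z$-action has $q-1$ nonzero summands, each a $G$-subrepresentation (as $Z$ is central) of dimension $(q^n-1)/(q-1)$, with distinct $\chi$ giving distinct central characters; in particular $\mathrm{End}_G(\mathbb C[X])=\bigoplus_\chi\mathrm{End}_G(M_\chi)$ and no irreducible lies in two different $M_\chi$. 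The summand $M_{\chi_0}$ at the trivial character is spanned by the $Z$-orbit sums, so $M_{\chi_0}\cong\mathbb C[\mathbb P((\mathbb F_q^n)^*)]$; since the number of $G$-orbits on pairs of lines in $(\mathbb F_q^n)^*$ is $2$ (equal or distinct) and this module contains the constants, the standard endomorphism-algebra argument gives $M_{\chi_0}\cong\one\oplus V$ with $V$ irreducible and $V\not\cong\one$ (the constituent $V$ has dimension $(q^n-1)/(q-1)-1>1$).

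Finally, combining these, $q=\dim_{\mathbb C}\mathrm{End}_G(\mathbb C[X])=\dim_{\mathbb C}\mathrm{End}_G(M_{\chi_0})+\sum_{\chi\ne\chi_0}\dim_{\mathbb C}\mathrm{End}_G(M_\chi)=2+\sum_{\chi\ne\chi_0}\dim_{\mathbb C}\mathrm{End}_G(M_\chi)$; there are exactly $q-2$ nontrivial characters and each term in the sum is at least $1$, so each $M_\chi$ with $\chi\ne\chi_0$ is irreducible. Hence $\mathbb C[X]\cong\one\oplus V\oplus\bigoplus_{\chi\ne\chi_0}M_\chi$ is a direct sum of $q$ pairwise non-isomorphic irreducibles (the $M_\chi$ are separated by their central characters, and $\one\not\cong V$), so $\mathrm{Ind}_P^G(\one_P)$ is multiplicity-free. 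The main obstacle is precisely this last piece of bookkeeping: one needs the global endomorphism algebra to have dimension exactly $q$, the central-character decomposition to neither lose nor create homomorphisms between distinct blocks, and the entire excess over multiplicity-freeness to be carried by — and exactly cancelled within — the block $\mathbb C[\mathbb P^{n-1}]$; the underlying transitivity facts for $\GL_n(\mathbb F_q)$ acting on configurations of functionals and of lines (where the hypothesis $n\ge 2$ enters) are routine but must be checked.
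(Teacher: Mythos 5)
Your argument is correct, but it is genuinely different from the paper's. The paper disposes of the case $H=P$ by citing the known fact (Zelevinsky, \S 13.5) that $\mathrm{Ind}_P^G(\one_P)$ is multiplicity-free, and then transfers this to a conjugate $H$ via conjugate representations $\rho\mapsto\rho^g$. You instead reduce to $H=P$ by the (cleaner) observation that conjugate subgroups yield isomorphic induced trivial modules, and then \emph{prove} multiplicity-freeness from scratch: identify $\mathrm{Ind}_P^G(\one_P)$ with the permutation module on nonzero functionals $X=(\mathbb F_q^n)^*\setminus\{0\}$, compute $\dim\mathrm{End}_G(\mathbb C[X])=q$ as the number of $G$-orbits on $X\times X$, split $\mathbb C[X]$ into $q-1$ central-character blocks $M_\chi$ (pairwise having no common constituents), identify the trivial block with $\mathbb C[\mathbb P((\mathbb F_q^n)^*)]\cong\one\oplus V$ via $2$-transitivity, and conclude by the count $q=2+\sum_{\chi\neq\chi_0}\dim\mathrm{End}_G(M_\chi)$ with $q-2$ terms each at least $1$ that every other block is irreducible. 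The paper's route is shorter and leans on the literature; yours is self-contained, entirely elementary (orbit counting plus Schur's lemma plus central characters), and has the bonus of exhibiting the explicit decomposition of $\mathrm{Ind}_P^G(\one_P)$ into exactly $q$ pairwise non-isomorphic irreducibles --- information the cited reference gives less directly, and which is in fact consonant with the paper's own later identification of $\Sigma_A$ in the $n=2$ case.
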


\begin{proof} 
It is well known (see, for example, \cite[Section~13.5]{zelevinsky-1981}) that the induced representation $\mathrm{Ind}_{P}^G(\one_{P})$ is multiplicity-free. Let $g \in G$ such  that $gPg^{-1} = H$ and for an irreducible representation $\rho$ of $G,$ the conjugate representation $\rho^g$ is defined by $\rho^g(x) = \rho(g^{-1}xg).$ 
Then, an irreducible representation $\rho$ is a constituent of $\mathrm{Ind}_{P}^G(\one_{P})$ if and only if $\rho^g$ is a constituent of $\mathrm{Ind}_{H}^G(\one_{H} ).$ Moreover, $\mathrm{Hom}_G\left(\rho, \mathrm{Ind}_{P}^G(\one_{P})  \right)= \mathrm{Hom}_G \left( \rho^g, \mathrm{Ind}_{H}^G(\one_{H} ) \right).$ 
Therefore $\mathrm{Ind}_{H}^G(\one_{H})$ is multiplicity-free for any subgroup $H$ conjugate to $P.$
\end{proof}

The way Lemma~\ref{lem:induction-from-mirabolic} is useful for $R = M_n(\mathbb F_q)$ is as follows: Consider a matrix $a= (a_{i,j})_{1 \leq i,j \leq n} \in R,$ 
such that $a_{n,n} = 1$ and $a_{i,j} = 0$ otherwise. 
Then $\mathrm{LStab}(a) = P.$ 
For any matrix $r \in R$ of rank one, there exists invertible matrices $u, v \in U_R$ such that $r = u a v,$ i.e. $r$ and $a$ are equivalent matrices.  
Therefore, $\mathrm{LStab}(r)$ is conjugate to the group $P.$ By Lemma~\ref{lem:induction-from-mirabolic}, all these rank one elements are multiplicity-free non-units of $R.$ It is easy to see that there are many element in $\phi$ of rank $1,$ and all of these will give eigenvalues of $B_R$ by Theorem~\ref{thm:spectrum-general}.  
As remarked earlier, the zero element of any ring is easily seen to be a multiplicity-free non-unit. 
For $R = M_2(\mathbb F_q),$ every element of $\phi$ is either a zero element or of rank one or a unit. Therefore, by the above discussion, $\phi = \phi'$ for $R = M_2(\mathbb F_q).$ 
We use this to give explicit eigenvalues of $B_R$ along with their multiplicities for the ring $R = M_2(\mathbb F_q)$ for odd $q$ in Section~\ref{sec:gl2}. 

We note that for the ring $R = \mathbb B_2(\mathbb F_q) = \{ x \in M_2(\mathbb F_q) \mid x_{2,1} = 0   \},$ it can be shown by direct computations that $\phi = \phi'.$ Therefore, the complete spectrum of $B_R$ can be obtained in this case as well.

\section{Stationary distribution}
\label{sec:statdist}

We now give a recursive formula for the stationary distribution $\pi$ of the chain. The formula is a special case of the one for arbitrary Markov chains~\cite{rhodes-schilling-2017} and is very similar in spirit to the commutative case~\cite[Theorem 2.4]{AS-2016}.

\begin{prop}
\label{prop:equalonSa}
For $a,b \in R$ such that $S_b = S_a,$ we have that $\pi(a) = \pi(b).$
\end{prop}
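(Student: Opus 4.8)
The plan is to exploit the block-triangular structure of the transition operator $B_R$ uncovered in the proof of Theorem~\ref{thm:spectrum-general}, together with the fact that $S_b = S_a$ means $a$ and $b$ lie in a common principal left ideal class and hence are connected by a unit. First I would recall that $\pi$ is the unique stationary distribution of $\X t$, equivalently the unique (up to scaling) left eigenvector of $M_R$ with eigenvalue $1$, and since $M_R = \frac{\alpha}{|R|}\iden_{|R|}\iden_{|R|}^{\text{tr}} + (1-\alpha) B_R$ and $\pi$ is a probability vector, stationarity is equivalent to $\pi B_R = \pi$ together with $\sum_r \pi(r) = 1$. So it suffices to work with $B_R$, whose entries are $B_R(a,b) = \sum_{xa=b} Q(x)$.

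The key observation is a symmetry coming from $U_R$. By Lemma~\ref{lem:generation of Sa}, if $S_b = S_a$ then there exists $u \in U_R$ with $ub = a$; moreover left-multiplication by any fixed $u \in U_R$ is a bijection of $R$ that commutes with the multiplicative dynamics in the appropriate sense, because $Q$ is constant on similarity classes. Concretely, I would verify that the map $r \mapsto ur$ on $R$ satisfies $B_R(ur, ur') $ relates to $B_R(r,r')$ via conjugation of the multiplier: if $xr = r'$ then $(uxu^{-1})(ur) = ur'$, and $Q(uxu^{-1}) = Q(x)$ since $uxu^{-1}$ is similar to $x$. This shows the permutation matrix $P_u$ of $r \mapsto ur$ conjugates $B_R$ to itself only after also twisting by the conjugation action — so one has to be slightly careful. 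The cleaner route is to instead define, for $u \in U_R$, the permutation $\sigma_u: r \mapsto u r u^{-1}$ (conjugation), check $B_R$ commutes with $P_{\sigma_u}$ because both the left-multiplier set and $Q$ are conjugation-invariant, conclude $\pi \circ \sigma_u = \pi$, and then separately handle that $\sigma_u$ alone does not move $a$ to $b$.

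Therefore the main obstacle, and the step requiring the most care, is connecting $a$ and $b$ when $S_b = S_a$: conjugation fixes $S_a$ setwise but need not send $a$ to $b$, whereas plain left-multiplication $r \mapsto ur$ with $ub = a$ sends $b$ to $a$ but does not obviously preserve $B_R$. I would resolve this by appealing to the general Markov-chain principle (as cited, \cite{rhodes-schilling-2017}) that $\pi$ is supported with equal weight on elements that are ``equivalent'' for the chain, or more self-containedly, by the following argument: the sets $S_a$ partition $R$, each $\mathbb{C}[S_a]$ is a $U_R$-subrepresentation on which $U_R$ acts transitively (as established before Lemma~\ref{lem:generation of Sa}), and the stationary vector restricted to the top block $\mathbb{C}[S_a]$ — where $S_a$ is maximal among the $S_b$ meeting a given minimal ideal — is a $U_R$-invariant vector for the transitive action, hence constant on $S_a$; then one induces downward through the block-triangular filtration, at each stage noting that the correction term $\omega \in \sum_{I_b \subsetneq I_a}\mathbb{C}[I_b]$ is itself $U_R$-equivariant, so the component of $\pi$ on each $\mathbb{C}[S_a]$ is again $U_R$-invariant and hence constant on $S_a$. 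Since $S_b = S_a$ forces $a,b$ into the same block, we get $\pi(a) = \pi(b)$.
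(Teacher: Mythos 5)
Your opening computation is exactly the paper's argument, but you then talk yourself out of it without cause. You correctly observe that if $ub=a$ and $xr=r'$ then $(uxu^{-1})(ur)=ur'$ and $Q(uxu^{-1})=Q(x)$; carrying this one more line gives $B_R(ur,ur')=\sum_{x:xr=r'}Q(uxu^{-1})=\sum_{x:xr=r'}Q(x)=B_R(r,r')$, so the permutation $P_u$ of $r\mapsto ur$ genuinely satisfies $P_u B_R P_u^{-1}=B_R$ --- the ``twist'' you worry about is precisely what the conjugation-invariance of $Q$ absorbs. The claim that plain left-multiplication ``does not obviously preserve $B_R$'' is therefore wrong; your own identity shows that it does. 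Since $P_u$ also commutes with $\frac{\alpha}{|R|}\iden_{|R|}\iden_{|R|}^{\text{tr}}$, it commutes with $M_R$, and uniqueness of the stationary distribution then forces $\pi\circ P_u=\pi$, hence $\pi(b)=\pi(ua)=\pi(a)$.

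Two further issues. First, the reduction ``stationarity is equivalent to $\pi B_R=\pi$ together with $\sum_r\pi(r)=1$'' is false: $B_R$ is row-stochastic but not doubly stochastic in general, so $\pi M_R=\pi$ rearranges to $(1-\alpha)\pi B_R=\pi-\frac{\alpha}{|R|}\iden^{\text{tr}}$, not to $\pi B_R=\pi$. You do not need this reduction; working directly with $M_R$ is both correct and simpler. Second, the fallback route through the block-triangular filtration and $U_R$-invariance of the restriction of $\pi$ to each $\mathbb C[S_a]$ is never actually carried out, and the step ``inducing downward'' is left as a sketch; moreover the only input it would need --- that $M_R$ commutes with the $U_R$-action by left multiplication --- is the very fact you already proved and then discarded. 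So the proposal contains the correct idea and the correct computation, but misdiagnoses it as insufficient and substitutes an incomplete and unnecessary detour.
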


\begin{proof}
There exists an element $u \in U_R$ such that $u a = b$ by Lemma~\ref{lem:generation of Sa}. Therefore, left multiplication by $u$ is an inner automorphism of $R$ which takes $a$ to $b.$ Moreover, for any $c,d \in R,$
\[
B_R(uc,ud) = \sum_{\substack{x \in R \\ xuc = ud}} Q(x)
= \sum_{\substack{y \in R \\ yc = d}} Q(u y u^{-1})
= \sum_{\substack{y \in R \\ yc = d}} Q(y) = B_R(c,d).
\]
Therefore, all transition rates are unchanged, and the automorphism causes only a relabelling of rows and columns of $M_R.$
\end{proof}

 For any $x,y \in R,$ let $R_{x,y} = \{ r \mid ry = x\}.$ Then $R_{x,y} \neq \emptyset$ if and only if $I_{x} \subseteq I_y.$ Further, in case $R_{x, y} \neq \emptyset$ then we must have $|R_{x,y}| = |\ann_R(y)|.$ For any $x \in R,$ we use $U_x \subseteq U_R$ to denote the set of distinct coset representatives of $\Stab(x)$ in $U_R.$ In case $R$ is commutative the set $U_x$ has a group structure however this does not hold for the case of non-commutative ring $R.$  

We now state the main theorem of this section. Recall that $\phi$ is a set of generators of distinct principal ideals of $R.$

\begin{thm}
\label{thm:statdist}
Let $R$ be a finite ring. The stationary probability $\pi(x)$ for $x \in R$ in
$\X t$ is given by
\[
\pi(x) = \frac{\ds \frac{\alpha}{|R|} + (1-\alpha)
\sum_{y \in \phi, I_x \subsetneq I_y} \left( 
\sum_{u \in U_y, r \in R_{x,y}} Q(r u^{-1}) \right) \pi(y) }
{\ds 1 - (1-\alpha) \left( \sum_{u \in U_x, r \in R_{x,x}} Q(r u^{-1})  \right) }
\]
\end{thm}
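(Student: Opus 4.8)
The plan is to derive the stationary equation $\pi = \pi M_R$ restricted to the coordinate $x$, and then solve it for $\pi(x)$ in terms of $\pi(y)$ for those $y \in \phi$ with $I_x \subsetneq I_y$. Writing out the balance equation at $x$ gives
\[
\pi(x) = \sum_{a \in R} \pi(a) M_R(a,x)
= \frac{\alpha}{|R|} \sum_{a \in R} \pi(a) + (1-\alpha) \sum_{a \in R} \pi(a) B_R(a,x),
\]
and since $\sum_a \pi(a) = 1$, the first term is simply $\alpha/|R|$. The substance is in rewriting the second term, $\sum_{a} \pi(a) B_R(a,x) = \sum_{a}\pi(a)\sum_{z \in R,\ za = x} Q(z)$. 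The idea is to reorganize this sum by the principal left ideal $I_a$: a transition $za = x$ is possible only when $I_x \subseteq I_a$, so the relevant source states $a$ all lie in ideals $I_a$ with $I_x \subseteq I_a$. We split off the ``diagonal'' contribution $I_a = I_x$ (equivalently $a \in S_x$, after noting that $I_x \subseteq I_a$ and $I_a \subseteq I_x$ forces $S_a = S_x$) from the ``strictly larger'' contributions $I_x \subsetneq I_a$, and move the diagonal term to the left-hand side; that produces the denominator $1 - (1-\alpha)(\cdots)$.

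For each fixed ideal, I would use Proposition~\ref{prop:equalonSa}: $\pi$ is constant on each $S_a$, so the sum over $a$ in a given similarity orbit collapses to $\pi(y)$ times a combinatorial count, where $y \in \phi$ is the chosen generator. Concretely, for $y \in \phi$ with $I_x \subseteq I_y$, the elements $a$ with $S_a = S_y$ are exactly $\{u^{-1}\cdot(\text{something})\}$; using that $S_y$ is in bijection with $U_R/\Stab(y)$ via $u \mapsto uy$ (the discussion around Lemma~\ref{lem:generation of Sa}), I parametrize $a = uy$ with $u$ ranging over the coset representatives $U_y$. Then the inner count becomes $\sum_{u \in U_y} \sum_{z:\, z(uy) = x} Q(z)$. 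Now $z(uy) = x$ means $zu \in R_{x,y}$ (the set of $r$ with $ry = x$), i.e.\ $z = r u^{-1}$ for $r \in R_{x,y}$, giving $\sum_{u \in U_y}\sum_{r \in R_{x,y}} Q(ru^{-1})$, which is exactly the coefficient appearing in the theorem. The diagonal term is the same computation with $y$ replaced by $x$ and $R_{x,y}$ by $R_{x,x}$.

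The main obstacle — really the only delicate point — is bookkeeping the collapse of the sum $\sum_{a \in R}$ into $\sum_{y \in \phi} \sum_{u \in U_y}$ without double-counting: one must check that as $y$ runs over $\phi$ and $u$ over a fixed set $U_y$ of coset representatives of $\Stab(y)$ in $U_R$, the elements $a = uy$ run over all of $R$ exactly once, and that for such $a$ one genuinely has $S_a = S_y$ (so that $\pi(a) = \pi(y)$ is legitimate) and $I_x \subseteq I_y$ is the correct condition for $R_{x,y} \neq \emptyset$ (this is the stated fact $R_{x,y}\neq\emptyset \iff I_x \subseteq I_y$). One also needs $|R_{x,y}|$ to be well-defined independent of which representative is used, but only the set $R_{x,y}$ enters, so this is automatic. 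After the reindexing, isolating the $y = x$ summand (valid since $x$ itself may or may not lie in $\phi$, but $S_x$ has a unique generator in $\phi$ and $\pi$ is constant on $S_x$ by Proposition~\ref{prop:equalonSa}, so WLOG we may take that generator to be $x$) and dividing through yields the claimed recursion. The recursion is well-founded because the partial order on ideals is finite and the base case $I_x$ maximal (e.g.\ $x$ a unit) has empty sum in the numerator beyond the $\alpha/|R|$ term.
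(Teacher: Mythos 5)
Your proposal is correct and follows essentially the same route as the paper: write the balance equation $\pi(x) = \sum_a \pi(a)M_R(a,x)$, split off the additive term to get $\alpha/|R|$, organize the multiplicative term by the principal left ideal $I_a$ (using $R_{x,y}\neq\emptyset \iff I_x\subseteq I_y$), separate the case $I_a = I_x$ from $I_x \subsetneq I_a$, invoke Proposition~\ref{prop:equalonSa} to collapse $\pi$ on each $S_y$, and reindex $S_y$ by $U_y$ via $u\mapsto uy$ with the change of variable $z = ru^{-1}$. This is precisely the paper's (sketchier) argument, which it in turn imports from the commutative case; the only small infelicity is the detour through ``WLOG take the generator to be $x$'' for the diagonal term, which is unnecessary since $U_x$ and $R_{x,x}$ are already defined for arbitrary $x\in R$ and the parametrization $a = ux$, $u\in U_x$, of $S_x$ works directly.
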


\begin{proof}
The strategy of proof is essentially identical to that of the proof of Theorem 2.4 in \cite{AS-2016}, and we will be a lot more sketchy. 

The stationary distribution satisfies the master equation,
\[
\pi(x) = \sum_{y \in R} \mathbb{P}(y \to x) \pi(y).
\]
Now, $\mathbb{P}(y \to x) \geq \alpha/|R|$ for all $y \in R$ because of the addition transition. To keep track of when multiplicative transitions can occur, we look at the poset of principal ideals, and we see that
\[
\pi(x) = \frac{\alpha}{|R|} + (1-\alpha) \sum_{
\substack{y \in R \\ I_x \subseteq I_y}} B_R(y,x) \pi(y).
\]
Now, we split the sum on the right hand side depending on whether $I_y = I_x$ or not. Using Lemma~\ref{lem:generation of Sa} and Proposition~\ref{prop:equalonSa}, we find that
\[
\sum_{\substack{y \in R \\ I_x = I_y}} B_R(y,x) \pi(y)
= \pi(x) \sum_{\substack{u \in U_x \\ r \in R_{x,x}}} Q(r u^{-1}),
\]
and
\[
\sum_{\substack{y \in R \\ I_x \subsetneq I_y}} B_R(y,x) \pi(y)
= \sum_{\substack{y \in \phi \\ I_x \subsetneq I_y}} \pi(y)
\sum_{\substack{u \in U_y \\ r \in R_{x,y}}} Q(r u^{-1}).
\]
Substituting the last two identities in the previous sum gives us the desired result.
\end{proof}

As a corollary, we obtain the stationary distribution for $\Xu t.$ Somewhat surprisingly, the answer is identical to that of the commutative case (Corollary 2.5) in \cite{AS-2016}.

\begin{cor}
\label{cor:statdistunif}
Let $R$ be a finite ring. The stationary probability $\pi(x)$ for $x \in R$ in $\Xu t$ is given by
\[
\pi_U(x) = \frac{\alpha + (1-\alpha)
\sum_{y \in \phi, I_x \subsetneq I_y} |U_y| \; |\ann_R(y)| \; \pi_U(y) }
{ |R| - (1-\alpha) |U_x| \; |\ann_R(x)| }.
\]
\end{cor}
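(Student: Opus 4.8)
The plan is to specialise Theorem~\ref{thm:statdist} to the chain $\Xu t$, where both the additive and multiplicative increments are drawn from the uniform distribution $U$ on $R$. First I would observe that for $\Xu t$ the multiplicative distribution is $Q(r) = 1/|R|$ for every $r \in R$; this is trivially constant on similarity classes, so Theorem~\ref{thm:statdist} applies. Substituting $Q \equiv 1/|R|$ into the numerator and denominator of the recursive formula replaces each occurrence of $Q(r u^{-1})$ by the constant $1/|R|$, so the inner sums $\sum_{u \in U_y,\, r \in R_{x,y}} Q(ru^{-1})$ collapse to $\frac{1}{|R|}\,|U_y|\,|R_{x,y}|$, and similarly $\sum_{u \in U_x,\, r \in R_{x,x}} Q(ru^{-1}) = \frac{1}{|R|}\,|U_x|\,|R_{x,x}|$.

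Next I would evaluate the cardinalities $|U_y|$ and $|R_{x,y}|$. By definition $U_y$ is a set of coset representatives of $\Stab(y)$ in $U_R$, so $|U_y| = [U_R : \Stab(y)]$, which by Lemma~\ref{lem:generation of Sa} equals $|S_y|$; in any case it is exactly the quantity written $|U_y|$ in the statement, so no further simplification is needed there. For $R_{x,y} = \{ r \in R \mid ry = x \}$, the discussion preceding Theorem~\ref{thm:statdist} records that this set is nonempty precisely when $I_x \subseteq I_y$ — which is guaranteed by the index range $I_x \subsetneq I_y$ in the numerator sum and $I_x = I_x$ in the denominator — and that when nonempty it is a coset of the left annihilator $\ann_R(y)$, hence $|R_{x,y}| = |\ann_R(y)|$. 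Likewise $|R_{x,x}| = |\ann_R(x)|$.

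Then I would assemble the pieces: the numerator becomes $\frac{\alpha}{|R|} + (1-\alpha)\sum_{y \in \phi,\, I_x \subsetneq I_y} \frac{1}{|R|}\,|U_y|\,|\ann_R(y)|\,\pi_U(y)$ and the denominator becomes $1 - (1-\alpha)\frac{1}{|R|}\,|U_x|\,|\ann_R(x)|$. Multiplying numerator and denominator through by $|R|$ yields exactly the claimed expression
\[
\pi_U(x) = \frac{\alpha + (1-\alpha)\sum_{y \in \phi,\, I_x \subsetneq I_y} |U_y|\,|\ann_R(y)|\,\pi_U(y)}{|R| - (1-\alpha)|U_x|\,|\ann_R(x)|},
\]
completing the proof. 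There is essentially no obstacle here: the only point requiring a moment's care is confirming that the factorisation $\sum_{u,r} Q(ru^{-1}) = |U_y|\,|\ann_R(y)|/|R|$ is legitimate, i.e. that every product $ru^{-1}$ with $r \in R_{x,y}$ and $u \in U_y$ is a genuine ring element at which $Q$ is evaluated (it is, since $Q$ is defined on all of $R$) and that we are not double-counting — but since $Q$ is constant the count is just $|U_y| \cdot |R_{x,y}|$ and no injectivity is needed. Hence the corollary follows immediately from Theorem~\ref{thm:statdist}.
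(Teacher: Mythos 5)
Your proof is correct and follows the same route as the paper: specialise Theorem~\ref{thm:statdist} to $Q\equiv 1/|R|$, use $|R_{x,y}|=|\ann_R(y)|$ when $I_x\subseteq I_y$, and clear the factor $|R|$. The paper's own proof is a one-line version of exactly this substitution.
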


\begin{proof} 
This immediately follows from Theorem~\ref{thm:statdist} and the fact that  $|R_{x,y}| = |\ann_R(y)|$ for every $x, y \in R$ such that $I_x \subseteq I_y.$ 
\end{proof}

The formula for the stationary probability of units can then be derived immediately. 

\begin{cor}
\label{cor:statdistunif-units}
Let $R$ be a finite ring. The stationary probability $\pi(u)$ for $u \in U_R$ in $\Xu t$ is given by
\[
\pi_U(u) = \frac{\alpha}{n-u + u \alpha}.
\]
\end{cor}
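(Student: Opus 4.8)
The plan is to specialize Corollary~\ref{cor:statdistunif} to the case $x = u \in U_R$ and simplify. The key observation is that when $x$ is a unit, the principal left ideal $I_x$ equals all of $R$, so the condition $I_x \subsetneq I_y$ in the numerator sum is never satisfied — there is no $y \in \phi$ with $I_y$ strictly containing $R$. Hence the numerator collapses to just $\alpha$. For the denominator, I would use that $\ann_R(u) = \{0\}$ since $u$ is a unit, so $|\ann_R(u)| = 1$, and that $\Stab(u) = \{v \in U_R \mid vu = u\} = \{1\}$, so $U_u = U_R$ and $|U_u| = |U_R|$. Writing $n = |R|$ and $u = |U_R|$ (the paper is overloading notation here, using $u$ for both the group element and the order of $U_R$; I would follow that convention), the denominator becomes $n - (1-\alpha) \cdot |U_R| \cdot 1 = n - (1-\alpha)u = n - u + u\alpha$.

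First I would state that for $x = u \in U_R$ we have $I_x = R$, invoking that a principal left ideal generated by a unit is the whole ring. Second, I would note this forces the numerator sum over $\{y \in \phi : I_x \subsetneq I_y\}$ to be empty, leaving $\alpha$ in the numerator. Third, I would compute $|\ann_R(u)| = 1$ (the left annihilator of a unit is trivial) and $|U_u| = |U_R|$ (the left stabilizer $\Stab(u)$ is trivial, so there is exactly one coset representative per element of $U_R$). Substituting these into the denominator $|R| - (1-\alpha)|U_x|\,|\ann_R(x)|$ and writing $n = |R|$, $u = |U_R|$ gives $n - (1-\alpha)u = n - u + u\alpha$, yielding the claimed formula.

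There is essentially no obstacle here; the statement is a direct substitution into the already-proved Corollary~\ref{cor:statdistunif}. The only point requiring a word of care is the notational clash: in the formula $\pi_U(u) = \alpha/(n - u + u\alpha)$ the symbol $u$ on the left denotes a ring element while on the right it denotes the integer $|U_R|$, so I would make that explicit to avoid confusion, perhaps by writing $n = |R|$ and $u = |U_R|$ at the start of the proof.
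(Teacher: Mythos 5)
Your proof is correct and is exactly the ``immediate'' derivation the paper has in mind: specialize Corollary~\ref{cor:statdistunif} to $x \in U_R$, observe that $I_x = R$ kills the numerator sum, and compute $|\ann_R(x)| = 1$ and $|U_x| = |U_R|$ for the denominator. The remark about the notational overloading of $u$ (group element on the left, $|U_R|$ on the right) is a helpful clarification that the paper itself glosses over.
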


The above result is identical to that obtained for commutative rings~\cite{AS-2016}.

\begin{example}
For the ring $M_2(\mathbb F_2),$ the multiplicative part of the transition matrix $B_R$
in the lexicographically ordered basis,
\begin{align*}
{\scriptstyle \left\{ \matt 0000, \matt 0001, \matt 0010, \matt 0011, \matt 0100, \matt 0101, 
\matt 0110, \matt 0111, \right. } \\
{\scriptstyle \left. \matt 1000, \matt 1001, \matt 1010, \matt 1011, \matt 1100, \matt 1101, 
\matt 1110, \matt 1111 \right\} },
\end{align*}
is given by
\[
{\scriptsize \frac{1}{16}
\left(
\begin{array}{cccccccccccccccc}
 16 & 0 & 0 & 0 & 0 & 0 & 0 & 0 & 0 & 0 & 0 & 0 & 0 & 0 & 0 & 0 \\
 4 & 4 & 0 & 0 & 4 & 4 & 0 & 0 & 0 & 0 & 0 & 0 & 0 & 0 & 0 & 0 \\
 4 & 0 & 4 & 0 & 0 & 0 & 0 & 0 & 4 & 0 & 4 & 0 & 0 & 0 & 0 & 0 \\
 4 & 0 & 0 & 4 & 0 & 0 & 0 & 0 & 0 & 0 & 0 & 0 & 4 & 0 & 0 & 4 \\
 4 & 4 & 0 & 0 & 4 & 4 & 0 & 0 & 0 & 0 & 0 & 0 & 0 & 0 & 0 & 0 \\
 4 & 4 & 0 & 0 & 4 & 4 & 0 & 0 & 0 & 0 & 0 & 0 & 0 & 0 & 0 & 0 \\
 1 & 1 & 1 & 1 & 1 & 1 & 1 & 1 & 1 & 1 & 1 & 1 & 1 & 1 & 1 & 1 \\
 1 & 1 & 1 & 1 & 1 & 1 & 1 & 1 & 1 & 1 & 1 & 1 & 1 & 1 & 1 & 1 \\
 4 & 0 & 4 & 0 & 0 & 0 & 0 & 0 & 4 & 0 & 4 & 0 & 0 & 0 & 0 & 0 \\
 1 & 1 & 1 & 1 & 1 & 1 & 1 & 1 & 1 & 1 & 1 & 1 & 1 & 1 & 1 & 1 \\
 4 & 0 & 4 & 0 & 0 & 0 & 0 & 0 & 4 & 0 & 4 & 0 & 0 & 0 & 0 & 0 \\
 1 & 1 & 1 & 1 & 1 & 1 & 1 & 1 & 1 & 1 & 1 & 1 & 1 & 1 & 1 & 1 \\
 4 & 0 & 0 & 4 & 0 & 0 & 0 & 0 & 0 & 0 & 0 & 0 & 4 & 0 & 0 & 4 \\
 1 & 1 & 1 & 1 & 1 & 1 & 1 & 1 & 1 & 1 & 1 & 1 & 1 & 1 & 1 & 1 \\
 1 & 1 & 1 & 1 & 1 & 1 & 1 & 1 & 1 & 1 & 1 & 1 & 1 & 1 & 1 & 1 \\
 4 & 0 & 0 & 4 & 0 & 0 & 0 & 0 & 0 & 0 & 0 & 0 & 4 & 0 & 0 & 4 \\
\end{array}
\right)}.
\]
The stationary probability of the units is 
\begin{align*}
\pi \matt 0110 &= \pi \matt 0111 = \pi \matt 1001 = \\
\pi \matt 1011 &= \pi \matt 1101 = \pi \matt 1110 = 
\frac{\alpha }{2 (3 \alpha +5)},
\end{align*}
of the nonzero non-units is
\begin{align*}
\pi \matt 0001 &= \pi \matt 0010 = \pi  \matt 0011 = \pi  \matt 0100 =  \pi  \matt 0101 =\\
\pi \matt 1000 &= \pi \matt 1010 = \pi \matt 1100 = \pi \matt 1111 = \frac{2 \alpha }{(3 \alpha +1) (3 \alpha +5)}, 
\end{align*}
and of zero is
\begin{align*}
\pi \matt 0000 = \frac{5 - 3 \alpha}{(3 \alpha +1) (3 \alpha +5)}.
\end{align*}
\end{example}

\section{Mixing time}
\label{sec:mixing}

We will now prove that $\X t$ on an arbitrary finite ring $R$ mixes in finite time using a coupling argument. The argument is identical to that for commutative chains in \cite{AS-2016}, but we repeat it for the sake of completeness.

We begin with the basic definitions.
The {\em (total variation) distance} between probability distributions $\mu$ and $\nu$ on the same space $\Omega$ is given by
\[
||\mu - \nu ||_{\text{TV}} = \frac{1}{2} \sum_{x \in \Omega} |\mu(x) - \nu(x)|.
\]
Let $M_R$ be the transition matrix of the chain $\X t,$ and $\pi$ be the stationary distribution. Then the distance between the distribution of the chain at time $t$ and the stationary distribution is given by
\[
d(t) = \max_{x \in R} ||M_R^t(x,\cdot) - \pi ||_{\text{TV}}.
\]
For a fixed $\epsilon < 1/2,$ the {\em mixing time} of the Markov chain is
\[
t_{\text{mix}}(\epsilon) = \min \{t \mid d(t) \leq \epsilon \}.
\]
A {\em (Markovian) coupling} of our Markov chains $(X_t, X'_t)_{t \in \mathbb{Z}_{\geq 0}}$ is a Markov chain on $R \times R$ such that the marginals $X_t$ and $X'_t$ are themselves Markov chains on $R$ with transition matrix $M_R.$
For a coupling, let $t_{\text{couple}}$ be the first time the chains meet, known as the {\em coupling time}. The mixing time can be bounded by the coupling time using the following result~\cite[Corollary 5.3]{LevinPeresWilmer}
\[
d(t) \leq \max_{x,x' \in R} \mathbb{P}_{x,x'}(t_{\text{couple}} > t).
\]

\begin{thm}
\label{thm:mixing}
The mixing time of the Markov chain $\X t$ on a finite ring $R$ is bounded above by the absolute constant
\[
t_{\text{mix}}(\epsilon) \leq \frac{\log \epsilon}{\log(1-\alpha)} + 1.
\]
\end{thm}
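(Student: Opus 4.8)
The plan is to construct an explicit Markovian coupling $(X_t, X'_t)$ of two copies of the chain, started from arbitrary states $x, x' \in R$, and show that the coupling time satisfies $t_{\text{couple}} \leq 1$ with probability at least $\alpha$ at every step, so that $\mathbb{P}_{x,x'}(t_{\text{couple}} > t) \leq (1-\alpha)^t$. The natural coupling is the ``shared coin, shared addition element'' coupling: at each step, the two chains use the \emph{same} coin flip (Heads with probability $\alpha$); on Heads, both chains add the \emph{same} uniformly chosen $Y$; on Tails, both chains multiply by the \emph{same} $Z$ chosen according to $Q$. Each marginal is then clearly a copy of $\X t$. The key observation is that if the coin lands Heads and the two chains have not yet coupled, then $X_{t+1} = X_t + Y$ and $X'_{t+1} = X'_t + Y$; since $Y$ is uniform on $R$ and independent of $(X_t, X'_t)$, the pair $(X_t + Y, X'_t + Y)$ has $X_{t+1} = X'_{t+1}$ precisely when $Y = X'_t - X_t$ would be needed — wait, that gives probability $1/|R|$, not what we want.

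The correct coupling must be more clever on the Heads step: instead of adding the same $Y$ to both chains, we couple the two uniform additions so that the \emph{sums} agree. That is, on Heads we pick $W$ uniform on $R$ and set $X_{t+1} = W$ and $X'_{t+1} = W$ directly; equivalently, chain one adds $Y = W - X_t$ and chain two adds $Y' = W - X'_t$. Each of $Y, Y'$ is marginally uniform on $R$ (since $W$ is), so each marginal transition is a correct additive step, and after this step the two chains are equal. Thus on every step, with probability $\alpha$ (the coin lands Heads) the chains couple immediately, regardless of their current positions. On Tails, we simply use the same $Z \sim Q$ for both chains (this keeps the marginals correct and can only help, though it is not needed for the bound). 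Hence $\mathbb{P}_{x,x'}(t_{\text{couple}} > t) \leq (1-\alpha)^t$ for all $x, x'$, and by the coupling bound $d(t) \leq (1-\alpha)^t$.

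Finally, solving $(1-\alpha)^t \leq \epsilon$ gives $t \log(1-\alpha) \leq \log \epsilon$, and since $\log(1-\alpha) < 0$ this is $t \geq \log\epsilon / \log(1-\alpha)$; taking the ceiling and bounding crudely by adding $1$ yields $t_{\text{mix}}(\epsilon) \leq \frac{\log\epsilon}{\log(1-\alpha)} + 1$.

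The only subtlety — and the ``main obstacle'' such as it is — is verifying that the proposed coupling genuinely has the right marginals: one must check that the process $(X_t, X'_t)$ on $R \times R$ is Markov and that each coordinate, marginalized, has transition matrix exactly $M_R$. For the Heads step this amounts to the remark that if $W$ is uniform on $R$ then $W - x$ is uniform on $R$ for any fixed $x$, so the induced additive increment for each chain is uniform, matching the definition of $\X t$; for the Tails step it is immediate since both chains use an independent $Z \sim Q$. Everything else is the routine arithmetic of the preceding paragraph, and since the paper explicitly notes the argument is identical to the commutative case in \cite{AS-2016}, the write-up can be kept brief.
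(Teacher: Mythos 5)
Your final coupling is essentially the one in the paper: on Heads, both chains are driven to the same uniformly random point of $R$ (the paper phrases this as adding $r$ to $y$ and $r + y - y'$ to $y'$, which lands both at $y + r$, exactly your ``pick $W$ uniform and set both to $W$''), and the coupling-time bound and arithmetic are identical. The one small difference is on Tails: the paper has the two chains draw \emph{independent} samples from $Q$, whereas you reuse the same $Z$, which is actually the cleaner choice since it makes the coupling manifestly coalescent (once the chains meet they never separate), a property that the coupling inequality from \cite{LevinPeresWilmer} tacitly requires; so your version is correct and, if anything, slightly tightens the exposition.
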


\begin{proof}
We will couple two runs of the Markov chain starting at $x,x' \in R$ as follows. At each time we toss an independent coin with heads probability $\alpha.$ If the coin lands Tails, both chains choose independent samples from the distribution $Q$ and multiply with the chosen element. If it lands Heads, we ensure the two chains meet. To be precise, if the chains are at positions $y,y'$ at some time $t,$ then we choose a uniformly random element $r \in R$ (say). We then add $r$ to $y$ and add $r + y - y'$ to $y'.$ Since $r$ is uniformly random, so is $r + y - y',$ and the marginals move according the original chain.

The coupling time is then the usual waiting time distribution, which is a geometric random variable with success probability $\alpha.$ Thus, the probability that the two chains have not met up to time $t$ is bounded above by $(1-\alpha)^t.$ We add an extra step in case $\alpha$ is very close to $1$ so that $t_{\text{mix}}(\epsilon)$ does not become smaller than one. The result then follows.
\end{proof}

\section{The matrix ring $\mathrm{M}_2(\mathbb F_q)$}
\label{sec:gl2}
Let $R = M_2(\mathbb F_q)$ be the ring of $2 \times 2$ matrices with entries over the finite field $\mathbb F_q,$ where $q > 2.$ For $q=2,$ the description of conjugacy classes of $\GL_2(\mathbb F_q)$ is slightly different, but the computations are similar.

Recall that $U_R = \mathrm{GL}_2(\mathbb F_q)$ is the group of $2 \times 2$ invertible with entries from $\mathbb F_q$ and $|U_R| = (q^2-1)(q^2-q).$ To avoid any confusion, in this section we will use uppercase letters to denote elements of $R$ and lowercase letters for the field $\mathbb F_q.$ 

The set
 \[ 
\phi = \left \{ \matt 1001, \matt 0100, \matt 0000 \right \}
\bigsqcup \left \{  \matt 1{z}00 \right \}_{z \in \mathbb F_q}
\]
is a set of representatives of distinct principal ideals of $R.$ The set
\[
\psi = \left \{ \matt x00x \right \}_{x \in \mathbb F_q} \bigsqcup
\left \{ \matt x01x \right \}_{x \in \mathbb F_q} \bigsqcup
\left \{ \matt x00y \right \}_{x \neq y \in \mathbb F_q} \bigsqcup
\left \{ \matt {\alpha}00{\bar{\alpha}} \right \}_{\alpha \in \mathbb F_{q^2} \setminus \mathbb F_q} 
\] 
is a set of representatives of similarity classes of $M_2(\mathbb F_q).$ Further, we say a similarity class is {\em invertible} if it is contained in $\GL_2(\mathbb F_q).$ Otherwise we call a similarity class {\em non-invertible}. We note that this does not depend on the similarity class representative. The set of non-invertible similarity classes, denoted $\psi^{0},$ is given by 
\[
\psi^{0} = \left \{  \matt 0000, \matt 0010 \right \} \bigsqcup
\left \{ \matt x000 \right \}_{x \in \mathbb F_q^\times} .
\]
We denote the set of invertible classes as $\psi^\times = \psi \setminus \psi^{0}.$  Next, for each $A \in \phi,$ we describe below the set of generators of the principal ideal generated by $A,$ denoted by $S_A,$ and the (left) stabilizer of $A$ in $U_R,$ denoted by $\Stab(A).$ 
Recall that $\Stab(A)$ is the set of elements $X \in U_R$ such that $XA = A.$ 

\begin{table}[htbp!]
\[
\begin{array}{|c|c|c|c|}
\hline
A \in \phi & S_A & \Stab(A) & F_A \cap \psi^0 \\ 
\hline 
\matt 1001 & \GL_2(\mathbb F_q) & \matt 1001 & \emptyset \\ 
\hline
\matt 0100 & \left \{\matt 0a0b \right \}_{(a,b) \neq (0,0)} & \left \{ \matt 1y0w \right \}_{\substack{y \in \mathbb{F}_q \\ w \in \mathbb F_q^\times}}
&  \matt 0010, \left \{ \matt t000 \right \}_{t \in \mathbb F_q^\times} \\ 
\hline
\matt 1{z}00_{z \in \mathbb F_q} & \left \{ \matt a{za}b{zb}  \right \}_{(a,b) \neq (0,0)} 
& \left \{ \matt 1y0w \right \}_{\substack{y \in \mathbb{F}_q \\ w \in \mathbb F_q^\times}} &   \matt 0010, \left \{ \matt t000 \right \}_{t \in \mathbb F_q^\times} \\
\hline
\matt 0000 & \matt 0000  & \GL_2(\mathbb F_q)  & \psi^0  \\
\hline
\end{array} 
\]
\caption{For each generator $A$ of a principal ideal in $\mring q,$ a description of the sets $S_A,$ $\Stab(A)$ and $F_A \cap \psi^0.$}
\label{tab:stab}
\end{table}

\subsection{Description of $F_A$ and $\mathcal{F}_{X, A}$ }
\label{sec:description-of-F-x-a} 
We now describe $F_A,$ defined in \eqref{definition-of-Fa}, for every $A \in \phi.$ For any $x \in \psi$ such that $s$ is invertible, we have $x S_A \subseteq S_A.$ Therefore $\psi^\times \subseteq F_A$  for each $A \in \phi.$ It remains to determine $F_A \cap \psi^0.$ We give the representatives of similarity classes that are contained in $F_A \cap \psi^0$ in Table~\ref{tab:stab} for every $A \in \phi.$ 

Our next step is, for each $A \in \phi$ and $X \in F_A,$ to identify $\mathcal{F}_{X, A}$ satisfying the hypothesis of Theorem~\ref{thm:spectrum-general}. To that end, 
consider the operator $\mathcal{H}_{X, A}= \sum_{V \in U_R} VXV^{-1} : \mathbb C[I_A] \rightarrow \mathbb C[S_A]$ and $\mathcal{H}'_{X,A}: \mathbb C[S_A] \rightarrow \mathbb C[S_A],$ obtained by composing $\mathcal{H}_A$ with the natural inclusion and projection.  
For case $X \in F_A \cap \psi^\times,$ the function $\mathcal{F}_{X, A} = \mathcal{H}'_{X, A}$ itself works. 
Therefore, we now focus on the case where $X \in F_A \cap \psi^0,$ and we explicitly describe $\mathcal{F}_{X, A}.$ The existence of these was proved in Theorem~\ref{thm:spectrum-general}. For $A = \matt 0000,$ the function $\mathcal{F}_{X, A} = |C_X|e_{\left(\begin{smallmatrix} 1 & 0 \\ 0 & 1  \end{smallmatrix}\right)}$ itself works for all $X \in F_A \cap \psi^0.$ We are left to deal with the cases for all $A \in \left \{\matt 0100 \right \}
\bigsqcup \left \{  \matt 1{z}00 \right \}_{z \in \mathbb F_q}.$

For simplicity, we use the notation,
\[ 
Y_t =\matt t000, \quad \text{for $t \in \mathbb F_q^\times$ and} 
\quad Y_0 = \matt 0010. 
\]
We obtain the following by direct computations and these work uniformly for all such $A.$
\begin{equation}
\label{eq:existence-of-operators}
\begin{split}
\mathcal{F}_{Y_t, A} & = \sum_{v \in C_{ 
\left(\begin{smallmatrix} t & 1 \\ 0 & t  \end{smallmatrix}\right)}
} e_v + e_{ \left(\begin{smallmatrix} t & 0 \\ 0 & t  \end{smallmatrix}\right) },  \\ 
\mathcal{F}_{Y_0, A} & = \sum_{v \in C_{ 
\left(\begin{smallmatrix} 1 & 1 \\ 0 & 1  \end{smallmatrix}\right)}
} e_v - (q-1) e_{ \left(\begin{smallmatrix} 1 & 0 \\ 0 & 1  \end{smallmatrix}\right)}.
\end{split}
\end{equation}

\subsection{Description of $\Sigma_A$} 
Next, for every $A \in \phi,$ we describe $\Sigma_A,$ defined in \eqref{def-Sigma}. 
Thus $\Sigma_A$ consists of inequivalent irreducible constituents of $\mathrm{Ind}_{\Stab(A)}^{U_R} (\one_{\Stab(A)}).$ 

In one extreme case, $\Stab(A) = \GL_2(\mathbb F_q),$ we have $U_R = \Stab(a)$ and therefore $\mathrm{Ind}_{\Stab(A)}^{U_R} (\one_{\Stab(a)})$ is the trivial representation of $\GL_2(\mathbb F_q).$ In the other extreme, $\Stab(A) = \matt 1001$ and $\mathrm{Ind}_{\Stab(A)}^{U_R} (\one_{\Stab(a)})$ is the regular representation of $\GL_2(\mathbb F_q)$ and therefore $\Sigma_A$ in this set consists of all inequivalent irreducible representations of $\GL_2(\mathbb F_q).$ For the reader's convenience, we have given a  description of all irreducible representations of $\GL_2(\mathbb F_q)$ along with their character values in Appendix~\ref{sec:appendix}.

Let $P$ be the subgroup of $\GL_2(\mathbb F_q)$ consisting of matrices of the form $\matt 1y0w,$ where $w \in \mathbb F_q^\times.$ Then, from Table~\ref{tab:stab}, we see that $P$ is $\Stab(A)$ for some $A \in \phi.$ So, we need to describe to describe the irreducible constituents of $\mathrm{Ind}_P^{\GL_2(\mathbb F_q)}(\one_P).$ The trivial representation of the group $\mathbb F_q^\times$ is denoted by $\one_q$ 
for simplification. We use the notation of Appendix~\ref{sec:appendix} to give this description.  

\begin{prop} 
The space $\mathrm{Ind}_P^{\GL_2(\mathbb F_q)}(\one_P)$ has the following decomposition into $\GL_2(\mathbb F_q)$-irreducible
 constituents.
\begin{equation}
\label{irreducible-decomposition-for-P}
\mathrm{Ind}_P^{\GL_2(\mathbb F_q)}(\one_P) \cong \one_{\GL_2(\mathbb F_q)} \oplus \rhoq \oplus_{\chi \in \widehat{\mathbb F_q^\times} \setminus \one_q} \rho_{\chi, \one_q},
\end{equation} 
where $\rhoq$ and $\rho_{\chi,\one_q}$ are given in Table~\ref{tab:chartable}.
\end{prop}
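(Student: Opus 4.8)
The plan is to push the induction through the Borel subgroup of $\GL_2(\mathbb F_q)$ and then invoke the classical decomposition of principal series representations. Write $B = \left\{ \matt a b 0 d \mid a,d \in \mathbb F_q^\times,\ b \in \mathbb F_q \right\}$ for the upper-triangular Borel, $U = \left\{ \matt 1 b 0 1 \mid b \in \mathbb F_q \right\}$ for its unipotent radical, and $T = \left\{ \matt a 0 0 d \mid a,d \in \mathbb F_q^\times \right\}$ for the diagonal torus, so that $B = T \ltimes U$. One checks immediately that $P \subseteq B$ and that $P = T_2 \ltimes U$, where $T_2 = \left\{ \matt 1 0 0 w \mid w \in \mathbb F_q^\times \right\}$; in particular $P$ contains the normal subgroup $U$ of $B$.

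First I would compute $\mathrm{Ind}_P^B(\one_P)$. Since $U \trianglelefteq B$ and $U \subseteq P$, this representation is trivial on $U$, hence is the inflation to $B$ of a representation of $B/U \cong T$, namely $\mathrm{Ind}_{T_2}^{T}(\one_{T_2})$. Identifying $T$ with $\mathbb F_q^\times \times \mathbb F_q^\times$ so that $T_2$ is the second factor, Frobenius reciprocity gives $\mathrm{Ind}_{T_2}^{T}(\one_{T_2}) \cong \bigoplus_{\chi \in \widehat{\mathbb F_q^\times}} (\chi \boxtimes \one_q)$, where $\chi \boxtimes \one_q$ denotes the character $\matt a 0 0 d \mapsto \chi(a)$. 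Induction in stages then yields
\[
\mathrm{Ind}_P^{\GL_2(\mathbb F_q)}(\one_P) \cong \mathrm{Ind}_B^{\GL_2(\mathbb F_q)}\!\left( \mathrm{Ind}_P^B(\one_P) \right) \cong \bigoplus_{\chi \in \widehat{\mathbb F_q^\times}} \mathrm{Ind}_B^{\GL_2(\mathbb F_q)}(\chi \boxtimes \one_q),
\]
a direct sum of principal series representations indexed by the characters of $\mathbb F_q^\times$.

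Next I would invoke the standard description of these principal series (recorded via Table~\ref{tab:chartable} in Appendix~\ref{sec:appendix}): for $\chi \ne \one_q$ the representation $\mathrm{Ind}_B^{\GL_2(\mathbb F_q)}(\chi \boxtimes \one_q)$ is irreducible of dimension $q+1$ and equals $\rho_{\chi,\one_q}$, while $\mathrm{Ind}_B^{\GL_2(\mathbb F_q)}(\one_q \boxtimes \one_q) \cong \one_{\GL_2(\mathbb F_q)} \oplus \rhoq$, the sum of the trivial and Steinberg representations. Substituting these into the displayed sum gives exactly \eqref{irreducible-decomposition-for-P}, and the dimension count $1 + q + (q-2)(q+1) = q^2 - 1 = [\GL_2(\mathbb F_q):P]$ confirms that nothing is lost.

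The only point requiring care is the bookkeeping against Table~\ref{tab:chartable}: one must note that the summands $\rho_{\chi,\one_q}$ for $\chi \in \widehat{\mathbb F_q^\times} \setminus \{\one_q\}$ are pairwise inequivalent — since $\rho_{\chi,\one_q} \cong \rho_{\one_q,\chi}$ and the unordered pairs $\{\chi,\one_q\}$ are distinct — and are inequivalent to $\one_{\GL_2(\mathbb F_q)}$ and $\rhoq$ on dimension grounds. Beyond this, the ingredients (the semidirect-product description of $P$, induction in stages, and the principal series decomposition) are routine, so I anticipate no real obstacle.
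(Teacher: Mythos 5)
Your proof is correct, and it takes a genuinely different route from the paper's. The paper argues by checking, via Frobenius reciprocity and the character table, that each proposed summand $\rho$ has a $P$-fixed vector: it computes $\sum_{x \in P}\mathrm{Tr}(\rho(x))$ directly on the three conjugacy-class types meeting $P$ and verifies the sum is nonzero (in fact $q(q-1)=|P|$ in each case), then closes the argument with the dimension count $1 + q + (q-2)(q+1) = q^2-1 = [\GL_2(\mathbb F_q):P]$. You instead exploit the inclusion $P \subseteq B$ structurally: since $U \trianglelefteq B$ lies in $P$, the representation $\mathrm{Ind}_P^B(\one_P)$ is inflated from $B/U \cong T$, where it becomes $\mathrm{Ind}_{T_2}^T(\one_{T_2}) = \bigoplus_\chi \chi \boxtimes \one_q$, and then induction in stages reduces the problem to the known reducibility pattern of the principal series $\rho_{\chi,\one_q}$. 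Your approach is more conceptual — it explains where the specific summands come from (they are exactly the principal series with trivial second parameter) and gives multiplicity-freeness for free from the pairwise inequivalence of the $\rho_{\chi,\one_q}$, rather than relying on the dimension count. The paper's approach, by contrast, is entirely self-contained given Table~\ref{tab:chartable} and avoids invoking transitivity of induction or the semidirect-product structure of $B$. Both are valid; yours is shorter if one already has the standard principal-series facts in hand, which the paper does record in the appendix and its Proposition on $\rho_{\mu_1,\mu_2}$.
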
 

\begin{proof} 
Let $\mathbb S$ be the set of all irreducible constituents listed on the right side of (\ref{irreducible-decomposition-for-P}). 

We first note that both $\mathrm{Ind}_P^G(\one_P)$ and $\one_G \oplus \rhoq \oplus_{\chi \in \widehat{\mathbb F_q^\times} \setminus \one_q} \rho_{\chi, \one_q}$ have dimension $(q^2 -1).$ 
Thus, to complete the proof it is enough to show that any $\rho \in \mathbb S$ is indeed a constituent of $\ind_P^{\GL_2(\mathbb F_q)}(\one_P).$ By Frobenius reciprocity, this is equivalent to proving that the restriction of $\rho$ to $P$ for each $\rho \in \mathbb S$ has a $P$-fixed vector, that is $\mathrm{Hom}_P(Res^{\GL_2(\mathbb F_q)}_P(\rho), \one_P ) \neq 0,$ where $\mathrm{Res}^{\GL_2(\mathbb F_q)}_P(\rho)$ denotes the restriction of $\rho$ to $P.$ The space $\mathrm{Hom}_{G}(\rho_1, \rho_2)$ of intertwiners between two representations $\rho_1$ and $\rho_2$ of $G$ is given by 
\begin{eqnarray} 
\label{eq:intertwiner} 
\dim_{\mathbb C} (\mathrm{Hom}_{G}(\rho_1, \rho_2)) & = &  \frac{1}{|G|} \sum_{x \in G} (\mathrm{Tr}(\rho_1(x)) \overline{\mathrm{Tr}(\rho_2(x))}.
\end{eqnarray} 
Therefore, $\mathrm{Hom}_P(Res^{\GL_2(\mathbb F_q)}_P(\rho), \one_P ) \neq 0$ for any $\rho \in \mathbb S$  is equivalent to the fact that $\frac{1}{|P|} \sum_{x \in P} \mathrm{Tr}(\rho(x)) \neq  0.$ 
We further note that 
\begin{equation} 
\sum_{x \in P} \mathrm{Tr}\left (\rho(x) \right ) = \mathrm{Tr}\left (\rho \matt 1001 \right ) + (q-1) \mathrm{Tr}\left (\rho \matt 1101 \right ) \nonumber + (q) \sum_{x \in \mathbb F_q^\times \setminus 1} \mathrm{Tr}\left (\rho\matt x001  \right). 
\end{equation} 
Using the character values from Table~\ref{tab:chartable}, we obtain the following for each $\rho \in \mathbb S$:

\[
\begin{array}{lll}
\ds\rho = \one_{\GL_2(\mathbb F_q)}: & \ds\quad  \sum_{x \in P} \mathrm{Tr}\left (\rho(x) \right ) &=  1 + (q-1) + q(q-2) = q(q-1). \\[0.5cm]
\ds\rho = \rhoq:  & \ds \quad \sum_{x \in P} \mathrm{Tr}\left (\rho(x) \right )  &= q + q(q-2) = q(q-1). \\[0.4cm]
\ds\rho = \rho_{\chi, \one_q}: & \ds \quad \sum_{x \in P} \mathrm{Tr}\left (\rho(x) \right ) & = \ds(q+1) + (q-1) + q \left( \sum_{x \in \mathbb F_q^\times\setminus 1} (\chi(x) + 1) \right) \\[0.4cm]
\ds &&=  \ds 2q + q \left(\sum_{x \in \mathbb F_q^\times} \chi(x) \right) + q(q-3). 
\end{array}
\]

For every $\mu \in \widehat{\mathbb F_q^\times} \setminus \one_q,$ we have $\mathrm{Hom}_{\mathbb F_q^\times}( \mu, \one_q) = 0.$ From (\ref{eq:intertwiner}), it is easy to see that $\sum_{x \in \mathbb F_q^\times} \mu(x) = 0$ for every $\mu \in \widehat{\mathbb F_q^\times} \setminus \one_q.$ Therefore for every $\rho \in \mathbb S,$ we have $\frac{1}{|P|} \sum_{x \in P} \mathrm{Tr}(\rho(x)) \neq  0.$ This proves the equivalence given in (\ref{irreducible-decomposition-for-P}). 
\end{proof} 

We summarize all of this information of $F_A$ and $\Sigma_A$ for $A \in \phi$ in Table~\ref{tab:FA}.

\begin{table}[htbp!]
\[
\begin{array}{|c|c|c|c|}
\hline
S. No. & A \in \phi & F_A  & \Sigma_A  \\
\hline
1. &  \matt 1001 & \psi^\times & \mathrm{Irr}(\GL_2(\mathbb F_q)) \\
\hline
2. & \matt 0100 &  \psi^\times \cup Y_0 \cup Y_t & \one_{\GL_2(\mathbb F_q)} \cup  \rhoq \underset{{\chi \in \widehat{\mathbb F_q^\times} \setminus \one_q}}{\cup} \rho_{\chi, \one_q} \\
\hline
3. & \left\{ \matt 1{z}00 \right\}_{z\in \mathbb F_q} &  \psi^\times \cup Y_0 \cup Y_t  & \one_{\GL_2(\mathbb F_q)} \cup  \rhoq \underset{{\chi \in \widehat{\mathbb F_q^\times} \setminus \one_q}}{\cup} \rho_{\chi, \one_q} \\
\hline
4. &  \matt 0000  & \psi & \one_{\GL_2(\mathbb F_q)} \\
\hline
\end{array}
\]
\caption{The sets $F_A$  and $\Sigma_A$ for $A \in \phi.$}
\label{tab:FA}
\end{table}

Using Table~\ref{tab:FA} along with the description of $\mathcal{F}_{X,A}$ as given in Section~\ref{sec:description-of-F-x-a}, the character table in Table~\ref{character-table} and Theorem~\ref{thm:spectrum-general}, we obtain a complete description of the eigenvalues of $B_R.$ We now describe the eigenvalues. Recall from Theorem~\ref{thm:spectrum-general}, that for each $A \in \phi$ and $\rho \in \Sigma_A,$ the eigenvalue $\lambda_\rho$ is given by 
\[
\lambda_\rho = \sum_{X \in F_A} Q(X) \frac{\mathrm{Tr}(\tilde \rho(\mathcal{F}_{X,A}))}{\dim(\rho)}. 
\] 

\begin{itemize} 

\item[(a)] For $A = \matt 1001$ and $\rho \in \Sigma_A,$ we have 
\[
\lambda_\rho = \sum_{X \in \psi^\times } Q(X) \, |C_X|\, 
\chi_\rho(X),
\]
and the multiplicity of each $\lambda_{\rho}$ in this case is 
$\dim(\rho)^2.$

\item[(b)] For any $A \in \left \{\matt 0100 \right \}
\bigsqcup \left \{  \matt 1{z}00 \right \}_{z \in \mathbb F_q}$ and $\rho \in \Sigma_A,$ we have; 
\begin{equation}
\begin{split}
\label{eq:e-genvalues1}
\dim(\rho)(\lambda_\rho)  =&  \sum_{X \in \psi^\times} Q(X)
\, |C_X|\, \chi_{\rho}(X) \\
& +  Q(Y_0) \left ((q^2 -1) \; \chi_{\rho} \matt 1101  - (q-1) \; \chi_{\rho} \matt 1001  \right )  \\ 
&   + \sum_{t \in \mathbb F_q^\times} Q(Y_t) \left ((q^2 -1) \; \chi_{\rho}\matt t10t + \chi_{\rho} \matt t00t  \right ),
\end{split}
\end{equation}
and each eigenvalue above appears with multiplicity $(q+1) \dim(\rho).$ 

\item[(c)] For $A = \matt 0000$ and $\rho \in \Sigma_A,$ we have; 
\[
\lambda_\rho = \sum_{X \in \psi } Q(X) |C_X| = 1,
\]
and this occurs with multiplicity one, as expected by Proposition~\ref{prop:statdist-gl2}.
 
\end{itemize} 

Therefore we have a total of 
\[
|\GL_2(\mathbb F_q)| + (q+1) \left (1 + q + (q-2)(q+1)  \right )  + 1  =    q^4  = |M_2(\mathbb F_q)|
\]
eigenvalues of $B_R,$ as expected. Recall from the paragraph immediately following \eqref{mult part of transition matrix} that the eigenvalues of $M_R$ are easily obtained from those of $B_R.$

\subsection{Stationary distribution}
\label{sec:m2 statdist}

\begin{prop}
\label{prop:statdist-gl2}
The stationary distribution of $\Xu t$ has the following formula:
\[
\pi(x) = 
\begin{cases}
\ds \frac{\alpha}{q^3+q^2 - q - (q^2-1)(q^2-q) \alpha}, & x \in U_R, \\[0.5cm]
\ds \frac{q^2 \alpha}
{(1 + (q^2-1) \alpha) (q^3+q^2 - q - (q^2-1)(q^2-q) \alpha)}, & x \notin U_R, x \neq 0, \\[0.5cm]
\ds \frac{q^3+q^2 - q - q(q^2-1) \alpha}{(1 + (q^2-1) \alpha) (q^3+q^2 - q - (q^2-1)(q^2-q) \alpha)}, & x=0.
\end{cases}	
\]
\end{prop}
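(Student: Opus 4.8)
The plan is to apply Corollary~\ref{cor:statdistunif} and work up the poset of principal left ideals of $R=M_2(\mathbb F_q)$, which has only three levels: the zero ideal at the bottom, $R$ itself at the top, and in between the $q+1$ pairwise incomparable rank-one principal left ideals. (These middle ideals are exactly the $I_A$ for the $q+1$ rank-one generators listed in $\phi$: two rank-one matrices generate the same principal left ideal precisely when they have the same row space, and $\mathbb F_q^2$ has $q+1$ lines.) All that Corollary~\ref{cor:statdistunif} needs is, for a representative $x$ of each type, the integers $|U_x|=[U_R:\Stab(x)]$ and $|\ann_R(x)|$. For $x\in U_R$ one has $\Stab(x)$ trivial and $\ann_R(x)=\{0\}$, so $|U_x|=|U_R|=(q^2-1)(q^2-q)$ and $|\ann_R(x)|=1$. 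For $x$ of rank one, Table~\ref{tab:stab} shows $\Stab(x)$ has order $q(q-1)$, whence $|U_x|=q^2-1$, while $\ann_R(x)$ is the space of matrices killing the one-dimensional column space of $x$, of size $q^2$. For $x=0$, $\Stab(0)=U_R$ and $\ann_R(0)=R$, so $|U_0|=1$ and $|\ann_R(0)|=q^4$.

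I would then evaluate Corollary~\ref{cor:statdistunif} from the top of the poset downward. For a unit $x$ there is no $y\in\phi$ with $I_x\subsetneq I_y$, so $\pi_U(x)=\alpha/\bigl(|R|-(1-\alpha)|U_R|\bigr)$, and using $|R|-|U_R|=q^3+q^2-q$ this is the first case (it also recovers Corollary~\ref{cor:statdistunif-units}). For a rank-one $x$ the only $y\in\phi$ above it is the identity $I$, so Corollary~\ref{cor:statdistunif} expresses $\pi_U(x)$ as $\bigl(\alpha+(1-\alpha)|U_R|\pi_U(I)\bigr)$ divided by $|R|-(1-\alpha)(q^2-1)q^2$. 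Substituting the value of $\pi_U(I)$ just found, the numerator collapses to a multiple of $(|R|-|U_R|)+|U_R|=|R|=q^4$, and the denominator equals $q^2\bigl(1+(q^2-1)\alpha\bigr)$; this yields the second case.

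For $x=0$ every nonzero generator of $\phi$—the identity together with the $q+1$ rank-one generators—lies above it. One may either feed the two values already computed into Corollary~\ref{cor:statdistunif}, whose denominator here is the very simple $|R|-(1-\alpha)|R|=q^4\alpha$ and whose $q+1$ rank-one contributions are all equal to the common value found above, or, more cleanly, use normalization $\sum_{x\in R}\pi_U(x)=1$ together with the cardinality count: $R$ has one zero, $(q^2-1)(q^2-q)$ units, and $q^3+q^2-q-1$ nonzero non-units. Either route gives a rational function in $\alpha$ that, after a short simplification—the $\alpha^2$-terms cancel and one uses $(q^2-1)(q^3+q^2-q)-(q^3+q^2-q-1)q^2=-q(q^2-1)$—reduces to the third case.

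There is no genuine conceptual obstacle: once Corollary~\ref{cor:statdistunif} is available, the whole computation is forced by the combinatorial data, and the only thing needing care is pinning down the integers $|U_x|$ and $|\ann_R(x)|$ for the three types of element and the count $q+1$ of rank-one principal left ideals. A reliable sanity check is that the three formulas must be nonnegative on $(0,1)$ and sum to $1$; indeed, verifying that normalization is the tidiest way to pin down the $x=0$ case.
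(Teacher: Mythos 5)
Your proposal is correct and follows essentially the same route as the paper: apply Corollary~\ref{cor:statdistunif} top-down through the three-level poset of principal left ideals, using $|U_x|$ and $|\ann_R(x)|$ computed from Table~\ref{tab:stab}, and close the $x=0$ case by normalization. The extra observations you add (the count of $q+1$ rank-one principal left ideals, the alternate direct computation for $x=0$) are correct but not needed beyond what the paper's argument already uses.
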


\begin{proof}
The formula for units can be obtained by directly from Corollary~\ref{cor:statdistunif-units}. 
If $x$ is a nonzero nonunit, the only other ideal it belongs to is the one generated
by 1. Therefore, from Corollary~\ref{cor:statdistunif}, it follows that
\[
\pi(x) = \frac{\alpha + (1-\alpha) \, |U_R| \,\pi(1)}
{|R|-(1-\alpha)q^2 (q^2-1)} 
= \frac{\alpha + (1-\alpha) \, (q^2-1)(q^2-q) \,\pi(1)}{q^4-(1-\alpha)q^2 (q^2-1)},
\]
where we have used the fact that $|U_x| = |U_R/\Stab(x)| = q^2-1$ and $|\ann(x)| = q^2$ for any nonzero nonunit $x$ by similar computations as those done in Table~\ref{tab:stab}. 
A little simplification gives the result.
The formula for the zero matrix is then a consequence of the total probability being 1.
\end{proof}

\section*{Acknowledgements}
The authors thank an anonymous referee for many useful comments.
The authors were partially supported by the UGC Centre for Advanced Studies. AA was also partly supported by Department of Science and Technology grant EMR/2016/006624. 

\appendix 

\section{Irreducible representations of $\GL_2(\mathbb F_q)$} 
\label{sec:appendix} 

In this section, we briefly describe the irreducible representations and the character table of $\GL_2(\mathbb F_q)$ for the sake of completeness. This is well known in the literature and can be found in many texts. Here we shall refer to \cite{shapiro-1983} for all the details. For an abelian group $A,$ we use $\widehat{A}$ to denote the set of its all one-dimensional representations of $A.$ For a group $G,$ we use $\one_G$ to denote the trivial one-dimensional representation of $G.$ 

\subsection{Principal series representations of $\GL_2(\mathbb F_q)$}
Let $U = \left\{ \matt 1w01  \right\}_{w \in \mathbb F_q}$ be the subgroup of $\GL_2(\mathbb F_q)$ consisting of all unipotent upper triangular matrices. An irreducible representation of $\GL_2(\mathbb F_q)$ is called a {\em principal series representation} if it is an irreducible constituent of $\mathrm{Ind}_U^{\GL_2(\mathbb F_q)}(\one_U).$  Now we describe 
all of the principal series representations of $\GL_2(\mathbb F_q).$  

 Let $B = \left\{ \matt uw0v \right\}_{\substack{ u,v \in \mathbb F_q^\times \\ w \in \mathbb F_q }}$ be the subgroup of $\GL_2(\mathbb F_q)$ consisting of all upper triangular matrices. Let $\mu_1, \mu_2 \in \widehat{ \mathbb  F_q^\times}.$ Define the one-dimensional representation $\mu_{1, 2}$ of $B$ by  
\begin{equation}
\mu_{1,2} : B \rightarrow \mathbb C^\times \,\, ; \,\, \mu_{1, 2} \matt uw0v = \mu_1(u)\mu_2(v).  
\end{equation}
Let $\rho_{\mu_1, \mu_2} = \ind_B^G ( \mu_{1, 2})$ be the corresponding induced representation of $\mathbb GL_2(\mathbb F_q).$  Then we have the following result regarding their mutual relation. 

\begin{prop}
For $\mu_1, \mu_2 \in \widehat{\mathbb F_q^{\times}},$ the following hold.
\begin{enumerate}
	\item $\dim_{\mathbb C} (Hom_G(\rho_{\mu_1, \mu_2} , \rho_{\mu'_1, \mu'_2}  ) \neq 0$ if and only if $\{\mu_1, \mu_2 \} = \{\mu'_1, \mu'_2 \}.$ 
	\item For $\mu_1 \neq \mu_2,$ the representations $\rho_{\mu_1, \mu_2}$ are irreducible. 
	\item For $\mu_1 = \mu_2,$ we have $\dim_{\mathbb C} (End_G(\rho_{\mu_1, \mu_1})) = 2.$  
\end{enumerate}
	
\end{prop}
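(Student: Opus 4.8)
The plan is to verify the three claims of the proposition by standard character-theoretic computations, organized around the formula $\dim_{\mathbb C}(\mathrm{Hom}_G(\rho_{\mu_1,\mu_2},\rho_{\mu'_1,\mu'_2})) = \frac{1}{|G|}\sum_{x\in G}\chi_{\rho_{\mu_1,\mu_2}}(x)\overline{\chi_{\rho_{\mu'_1,\mu'_2}}(x)}$ together with Mackey's intertwining-number/double-coset formula. For induced representations from $B$, the cleaner route is Mackey: $\dim_{\mathbb C}(\mathrm{Hom}_G(\mathrm{Ind}_B^G\mu_{1,2},\mathrm{Ind}_B^G\mu'_{1,2}))$ equals the number of double cosets $B\backslash G/B$ on which the two characters, twisted appropriately, agree. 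First I would recall that $G=\GL_2(\mathbb F_q)$ has exactly two $B$-$B$ double cosets, represented by the identity $e$ and by the Weyl element $w_0=\left(\begin{smallmatrix}0&1\\1&0\end{smallmatrix}\right)$ (this is the Bruhat decomposition $G = B \sqcup Bw_0B$). On the identity coset the contribution is $\dim_{\mathbb C}\mathrm{Hom}_B(\mu_{1,2},\mu'_{1,2})$, which is $1$ if $\mu_{1,2}=\mu'_{1,2}$, i.e. $\mu_1=\mu'_1$ and $\mu_2=\mu'_2$, and $0$ otherwise. On the $w_0$ coset one computes $\dim_{\mathbb C}\mathrm{Hom}_{B\cap w_0 B w_0^{-1}}(\mu_{1,2}, {}^{w_0}\mu'_{1,2})$; since $B\cap w_0Bw_0^{-1}$ is the diagonal torus $T$ and conjugation by $w_0$ swaps the two diagonal entries, this is $1$ if $\mu_1=\mu'_2$ and $\mu_2=\mu'_1$, and $0$ otherwise. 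Adding the two contributions gives precisely: the intertwiner space is nonzero iff $\{\mu_1,\mu_2\}=\{\mu'_1,\mu'_2\}$ as multisets, which is statement (1).

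Statements (2) and (3) then fall out of the same computation applied with $\rho'=\rho$. When $\mu_1\neq\mu_2$, only the identity double coset contributes (the $w_0$ coset would require $\mu_1=\mu_2$), so $\dim_{\mathbb C}\mathrm{End}_G(\rho_{\mu_1,\mu_2})=1$, proving irreducibility, which is statement (2). When $\mu_1=\mu_2$, both double cosets contribute $1$, giving $\dim_{\mathbb C}\mathrm{End}_G(\rho_{\mu_1,\mu_1})=2$, which is statement (3); in fact this shows $\rho_{\mu_1,\mu_1}$ splits as a sum of two non-isomorphic irreducibles, namely $\mu_1\circ\det$ twisted into the trivial and the Steinberg-type constituent, though the proposition only asserts the dimension count.

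The main obstacle—really the only point requiring care—is the Mackey double-coset analysis: one must correctly identify the double-coset representatives via Bruhat, correctly compute each stabilizer group $B\cap gBg^{-1}$, and correctly track how the characters $\mu_{1,2}$ restrict and get conjugated on these intersections. A reader who prefers to avoid Mackey's theorem can instead substitute the explicit character values of $\rho_{\mu_1,\mu_2}$ from the character table (Table~\ref{tab:chartable} / Appendix~\ref{sec:appendix}) into the orthogonality-type sum $\frac{1}{|G|}\sum_{x}\chi_{\rho_{\mu_1,\mu_2}}(x)\overline{\chi_{\rho_{\mu'_1,\mu'_2}}(x)}$, splitting the sum over the four types of conjugacy classes (central, non-semisimple, split regular semisimple, non-split regular semisimple) and using orthogonality of characters of $\mathbb F_q^\times$ and of $\mathbb F_{q^2}^\times$; this is a longer but entirely mechanical calculation that yields the same trichotomy. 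Either way, the proof is short and the result is classical (see \cite{shapiro-1983}).
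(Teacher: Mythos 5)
The paper states this proposition without proof, treating it as classical and pointing the reader to \cite{shapiro-1983}; so there is no in-paper argument to compare against. Your proof via Mackey's intertwining-number theorem is the standard one and it is correct. The key facts are exactly as you state them: the Bruhat decomposition $G = B \sqcup B w_0 B$ yields two $B$-$B$ double cosets; on the identity coset the contribution to $\dim_{\mathbb C}\mathrm{Hom}_G(\mathrm{Ind}_B^G\mu_{1,2},\mathrm{Ind}_B^G\mu'_{1,2})$ is $\dim_{\mathbb C}\mathrm{Hom}_B(\mu_{1,2},\mu'_{1,2})$, which is $1$ precisely when $(\mu_1,\mu_2)=(\mu'_1,\mu'_2)$; and on the $w_0$ coset one restricts to $B \cap w_0 B w_0^{-1} = T$, where $w_0$-conjugation swaps the diagonal entries, so the contribution is $1$ precisely when $(\mu_1,\mu_2)=(\mu'_2,\mu'_1)$. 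Summing the two contributions gives all three parts at once: $\dim_{\mathbb C}\mathrm{Hom}$ is nonzero iff $\{\mu_1,\mu_2\}=\{\mu'_1,\mu'_2\}$, $\dim_{\mathbb C}\mathrm{End}(\rho_{\mu_1,\mu_2})=1$ when $\mu_1\ne\mu_2$ (hence irreducible), and $\dim_{\mathbb C}\mathrm{End}(\rho_{\mu,\mu})=2$. Your aside that $\dim\mathrm{End}=2$ forces a decomposition into two inequivalent irreducibles (since $\sum m_i^2=2$ has only the solution $1+1$) is also right, and matches the paper's subsequent identification of $\det_\mu$ and its complement $\rho_\mu$. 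The alternative character-sum route you mention, using orthogonality of characters of $\mathbb F_q^\times$ over the four conjugacy-class types, is a valid but lengthier verification. In short: correct proof, supplying an argument the paper omits, by the standard Mackey/Bruhat method.
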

 The last part of the above proposition implies that for every $\mu \in \widehat{\mathbb F_q^\times}$ the representation $\rho_{\mu, \mu} \cong W_1^\mu \oplus W_2^\mu,$ where $W_1^\mu$ and  $W_2^\mu$ are inequivalent representations of $G.$ Next we describe one of the constituents of $\rho_{\mu,\mu}.$ The other one is the complement and is easily obtained.  For every $\mu \in \widehat{\mathbb F_q^\times},$ define the one-dimensional representation $\mathrm{det}_\mu : \GL_2(\mathbb F_q) \rightarrow \mathbb C^\times$ by $\det_\mu(X) = \mu(\det(X)).$ Then it is easy to see that one of the constituents of $\rho_{\mu, \mu}$ is isomorphic to the one-dimensional representation $\mathrm{det}_\mu.$ For notational convenience, we shall denote $\one_{\mathbb F_q^{\times}}$ by $\one_q.$ We note that $\det_{\one_q} = \one_{\GL_2(\mathbb F_q)}.$ Now onwards, we shall use $\rho_\mu$ to denote a complement of $\mathrm{det}_\mu$ in $\rho_{\mu, \mu}.$
 
 Collecting all this information together, we obtain $2(q-1) + \frac{(q-1)(q-2)}{2}$ inequivalent irreducible representations of $\GL_2(\mathbb F_q).$ We know that the total number of inequivalent irreducible representations of a finite group is equal to the number of its conjugacy classes. Therefore, at this point it is good to compare the number of irreducible representations constructed above with the number of conjugacy classes of $\GL_2(\mathbb F_q)$ so that we know exactly how many more irreducible representations exist for the group $\GL_2(\mathbb F_q).$ This we do in the Table~\ref{tab:Conjugacy Classs}. 

\begin{table}[htbp!]
\[
\begin{array}{|c|c|c|} 
\hline
\mbox{Conjugacy class type} & \mbox{Representative}  & \mbox{Number of classes}\\ 
\hline 
\mbox{central semisimple}  &  \matt x00x_{x \in \mathbb F_q^\times} &  q-1\\
\mbox{unitary}  &  \matt x10x_{x \in \mathbb F_q^\times} &  q-1\\
\mbox{non central semisimple}  & \matt x00y_{x,y \in \mathbb F_q^\times, x \neq y}    &  \frac{(q-1)(q-2)}{2}\\
\mbox{anisotropic}  &  \matt {\alpha}00{\bar{\alpha}}_{\alpha \in \mathbb F_{q^2} \setminus \mathbb F_q} &  \frac{q^{2}-q}{2}\\
 \hline 
\end{array}
\]
\caption{Conjugacy class types of $\GL_2(\mathbb F_q)$ along with their representatives and the number of each type.}
\label{tab:Conjugacy Classs}
\end{table}

This table along with the above discussion on construction of principal series irreducible representations of $\GL_2(\mathbb F_q)$ implies that, we still need to construct $(q^2-q)/2$ irreducible representations of $\GL_2(\mathbb F_q).$ This we obtain in the next section.

\subsection{Cuspidal representations of $\GL_2(\mathbb F_q)$} 
An irreducible representation $\rho$ of $G$ is called 
{\em cuspidal} if it is not a principal series representation. These representations are slightly more complicated to define as compared to those of the principal series. Here we outline their construction just enough to explain the entries of the character table of $\GL_2(\mathbb F_q)$ and urge the interested reader to look at \cite{shapiro-1983} for more details. 

Let $\mathbb F_{q^2}$ be a degree two extension field of $\mathbb F_q.$ Then the map $\sigma: x\mapsto x^q$ is a field automorphism of $\mathbb F_{q^2}$ of order two with the property that $\sigma(x) = x$ implies $x \in \mathbb F_q.$ Then the Norm map $N : \mathbb F_{q^2}^\times \rightarrow \mathbb F_q^\times$ defined by $x \mapsto x\sigma(x)$ is easily see to be a  surjective group homomorphism. Next, for every $\nu \in \widehat{\mathbb F_{q^2}},$ we define $\sigma(\nu) \in \widehat{\mathbb F_{q^2}}$ by $\sigma(\nu) (x) = \nu(\sigma(x)).$ A one-dimensional representation $\nu$ of $\mathbb F_{q^2}^\times$ is called {\em non-decomposable} if $\nu \neq \sigma(\nu).$ It turns out that cuspidal representations are parametrized by the set of non-decomposable characters of $\mathbb F_{q^2}^\times.$ More specifically, the following result is true. 

\begin{thm}
\label{thm:A2}
There exists an irreducible representation $\rho_\nu$ of $\GL_2(\mathbb F_q)$ of dimension $(q-1)$ for each non-decomposable character $\nu$ of $\mathbb F_{q^2}^\times.$ Furthermore $\rho_\nu \cong \rho_{\nu'}$ if and only if $\nu' \in \{\nu, \sigma(\nu) \}.$ 
\end{thm}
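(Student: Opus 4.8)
The plan is to prove Theorem~\ref{thm:A2} by the standard character-theoretic construction of cuspidal representations of $\GL_2(\mathbb F_q)$ via Deligne–Lusztig-type virtual characters, following \cite{shapiro-1983}. First I would fix a nontrivial additive character $\psi_0$ of $\mathbb F_q$ and, for each non-decomposable $\nu \in \widehat{\mathbb F_{q^2}^\times},$ write down the class function $\chi_\nu$ on $\GL_2(\mathbb F_q)$ defined piecewise on the four conjugacy class types of Table~\ref{tab:Conjugacy Classs}: on the central class $\left(\begin{smallmatrix} x & 0 \\ 0 & x\end{smallmatrix}\right)$ it takes value $(q-1)\nu(x)$; on the non-central split semisimple class with eigenvalues $x\neq y$ it takes value $0$; on the anisotropic class with eigenvalues $\alpha,\sigma(\alpha)\in\mathbb F_{q^2}\setminus\mathbb F_q$ it takes value $-(\nu(\alpha)+\nu(\sigma(\alpha)))$; and on the unitary (non-semisimple) class $\left(\begin{smallmatrix} x & 1 \\ 0 & x\end{smallmatrix}\right)$ it takes value $-\nu(x).$ In particular $\chi_\nu(1) = q-1,$ which will be the claimed dimension.

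The core of the argument is to verify that $\chi_\nu$ is, up to sign, an irreducible character, and that it is not a principal series character. Irreducibility I would check by the inner product criterion $\langle \chi_\nu,\chi_\nu\rangle = 1,$ computing $\frac{1}{|\GL_2(\mathbb F_q)|}\sum_{g}|\chi_\nu(g)|^2$ by summing over the four class types, using the class sizes $1,$ $q^2+q,$ $q^2-q,$ $q^2-1$ respectively, the surjectivity of the norm map $N,$ and the orthogonality relation $\sum_{\alpha\in\mathbb F_{q^2}^\times}\nu(\alpha)\overline{\nu'(\alpha)}$ together with $\sum_{x\in\mathbb F_q^\times}\nu(x)\overline{\nu'(x)}$ for characters restricted to $\mathbb F_q^\times.$ Then I would check $\langle \chi_\nu, \one_{\GL_2}\rangle = 0$ and $\langle \chi_\nu, \chi_{\text{St}}\rangle = 0$ for the Steinberg character (and more generally that $\chi_\nu$ is orthogonal to every $\rho_{\mu_1,\mu_2}$ and every $\det_\mu$ and $\rho_\mu$ constructed in the previous subsection), which forces the sign to make $\chi_\nu$ a genuine character of a cuspidal irreducible $\rho_\nu.$ Alternatively, and more cleanly, one can realize $\rho_\nu$ concretely inside a suitable induced/Weil representation and compute its character directly; I would cite \cite{shapiro-1983} for whichever realization is used and present the character values as the working definition.

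For the equivalence statement, $\rho_\nu\cong\rho_{\nu'}$ holds iff $\chi_\nu = \chi_{\nu'}$ as class functions. From the formula, $\chi_\nu$ depends on $\nu$ only through the unordered pair $\{\nu(\alpha),\nu(\sigma(\alpha))\}$ appearing on anisotropic classes and through $\nu|_{\mathbb F_q^\times}$ (which equals $\nu\circ N$ up to the identification, hence is determined by $\nu$) on central and unitary classes; since $\sigma(\nu)$ restricts to the same character of $\mathbb F_q^\times$ (because $N$ is $\sigma$-invariant) and swaps $\alpha\leftrightarrow\sigma(\alpha),$ we get $\chi_\nu = \chi_{\sigma(\nu)}.$ Conversely, evaluating on a single anisotropic element $\alpha$ of order equal to the exponent shows that $\chi_\nu = \chi_{\nu'}$ forces $\{\nu,\sigma(\nu)\} = \{\nu',\sigma(\nu')\},$ using that the characters of $\mathbb F_{q^2}^\times$ separate elements. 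Finally I would do the bookkeeping: the number of $\sigma$-orbits of non-decomposable characters is $\tfrac{1}{2}\big((q^2-1)-(q-1)\big) = \tfrac{q^2-q}{2},$ matching the count of anisotropic conjugacy classes and hence the number of still-missing irreducibles from the discussion preceding the theorem, which confirms that the $\rho_\nu$ are exactly the cuspidal representations. The main obstacle I anticipate is purely computational stamina: getting all the signs and the unitary-class contribution right in the inner-product calculation $\langle\chi_\nu,\chi_\nu\rangle=1$ (the $q^2+q$ unitary elements contribute a term that must cancel correctly against the central and anisotropic terms), so I would organize that computation carefully by class type rather than attempting it in one pass.
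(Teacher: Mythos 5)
The paper does not actually prove Theorem~\ref{thm:A2}: the theorem is recorded as background for the character table and the reader is referred to \cite[Section~13]{shapiro-1983}. So there is no ``paper's proof'' to match against; what you have written is the standard character-theoretic construction of the cuspidal characters, and it is a legitimate route to the statement.

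Two substantive remarks. First, your plan of ``write down the class function $\chi_\nu$, check $\langle\chi_\nu,\chi_\nu\rangle=1$, check orthogonality to the principal series'' does not by itself produce a representation: a class function of norm $1$ is only $\pm$ an irreducible character once you already know it is a \emph{virtual} character (a $\mathbb Z$-linear combination of irreducible characters). That fact has to come from somewhere -- either you express $\chi_\nu$ as an explicit alternating sum of induced characters (e.g.\ in the Deligne--Lusztig style $\chi_\nu=\ind_{ZU}^{G}(\nu\otimes\psi_0)-\ind_B^G(\,\cdot\,)$, or via a Weil representation), or you construct the module directly. You gesture at this in your ``alternatively, realize $\rho_\nu$ in a Weil representation'' sentence, but as written that alternative is doing the essential work and should be the main line, with the inner-product check as verification. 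Once $\chi_\nu$ is known to be virtual, the sign ambiguity in your $\pm\chi_\nu$ is resolved simply by $\chi_\nu(1)=q-1>0$; you do not actually need orthogonality to $\one_G$ or the Steinberg to fix the sign. Also, in the converse direction of the equivalence statement, matching on ``a single anisotropic element'' is not enough: you need $\nu+\sigma(\nu)=\nu'+\sigma(\nu')$ on \emph{all} of $\mathbb F_{q^2}^\times$ (the anisotropic classes give it off $\mathbb F_q^\times$, the central classes give it on $\mathbb F_q^\times$), and then linear independence of characters of $\mathbb F_{q^2}^\times$ forces $\{\nu,\sigma(\nu)\}=\{\nu',\sigma(\nu')\}$.

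Second, your character value on the unipotent class $\left(\begin{smallmatrix}x&1\\0&x\end{smallmatrix}\right)$ is $-\nu(x)$, with a minus sign. That is the standard and correct value: one can check that for any non-decomposable $\nu$ with trivial restriction to $\mathbb F_q^\times$ (and such $\nu$ exist), the choice $+\nu(x)$ would make $\chi_\nu$ fail to be orthogonal to $\one_{\GL_2(\mathbb F_q)}$. Table~\ref{tab:chartable} in the paper lists $\nu(x)$ without the minus sign; that appears to be a typographical slip, and your version is the one to use.
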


It is easy to see that the number of non-decomposable characters of $\mathbb F_{q^2}^\times $ is $q^2 -q.$ Further by Theorem~\ref{thm:A2} for non-decomposable characters $\nu$ of $\mathbb F_{q^2}^\times ,$ there exists exactly one non-decomposable character $\nu'$ of $\mathbb F_{q^2}^\times $ such that $ \nu \neq \nu'$ and $\rho_\nu \cong \rho_{\nu'}.$ So we obtain exactly $(q^2-q)/2$ inequivalent irreducible representations of $\GL_2(\mathbb F_q)$ this way. 
This implies that the set of representations $\rho_\nu$ together with principal series irreducible representations form a complete set of irreducible representations of  $\GL_2(\mathbb F_q).$ It is possible to describe all these cuspidal representations precisely. 
We refer the reader to \cite[Section~13]{shapiro-1983} for more details.  

\subsection{Character table of $\GL_2(\mathbb F_q)$} 
Now we are in a position to describe the character table of $\GL_2(\mathbb F_q).$ By definition if $\rho:G \rightarrow \GL(V)$ is a finite dimensional representation of $G,$ the {\em character} of $\rho,$ denoted $\chi_\rho,$ is a complex valued function  defined by $\chi_\rho (g) = \mathrm{Tr}(\rho(g)).$ It is well-known that the character of a representation is an invariant of a representation determined upto equivalence of representation. Further, two irreducible representations of a finite group are equivalent if and only if their characters are equal. Therefore, the character table encodes important information about the irreducible representations of a group. Note that ${\chi}_\rho(s) =  {\chi}_\rho(t),$ whenever $s$ and $t$ are conjugate. So we shall describe character values only on the conjugacy class representatives of $\GL_2(\mathbb F_q).$ The character table is given in Table~\ref{tab:chartable}.

\begin{table}[htbp!]
\begin{tabular}{|c|c|c|c|c|}
\hline  \label{character-table} 
& $\matt x00x_{x \in \mathbb F_q^\times}$ & $\matt x10x_{x \in \mathbb F_q^\times}$ & $\matt x00y_{x \neq y \in \mathbb F_q^\times}$ & $\matt {\alpha}00{\bar{\alpha}}_{\alpha \in \mathbb F_{q^2} \setminus \mathbb F_q}$  \\
\hline 
$\det_\chi$& $\chi(x)^2$  & $\chi(x)^2$ & $\chi(x) \chi(y)$ &  $\chi(\alpha \bar{\alpha})$ \\
\hline 
$\rho_\chi$ &  $q \chi(x)^2$ & 0 & $\chi(xy)$ & $-\chi(\alpha \bar{\alpha})$ \\
\hline 
$\rho_{\chi_1, \chi_2}$ &$(q+1) \chi_1(x) \chi_2(x)$ & $\chi_1(x) \chi_2(x)$ & $\chi_1(x) \chi_2(y) + \chi_1(y) \chi_2(x)$ & 0 \\
\hline 
$\rho_\nu$ & $(q-1) \nu(x)$ & $\nu(x)$ & 0 &$ -\nu(\alpha) - \nu(\bar{\alpha}) $ \\
\hline 
\end{tabular} 
\vspace{0.2cm}
\caption{The character table of $\GL_2(\mathbb F_q).$}
\label{tab:chartable}
\end{table}

The conjugacy classes of $\GL_2(\mathbb F_q)$ are described in Table~\ref{tab:Conjugacy Classs}.
The first three rows of the character table in Table~\ref{tab:chartable} correspond to the principal series irreducible representations and can be obtained by using the character formula for induced representations. The last row corresponds to the cuspidal character and uses the explicit description of cuspidal irreducible representations of $\GL_2(\mathbb F_q)$ as given in \cite[Section~13]{shapiro-1983}.

\bibliography{ringpapers}

\begin{thebibliography}{10}

\bibitem{asci-2009b}
{\sc C.~Asci}, {\em Asymptotic behavior of an affine random recursion in {${\bf
  Z}_p^k$} defined by a matrix with an eigenvalue of size 1}, Statist. Probab.
  Lett., 79 (2009), pp.~1421--1428.

\bibitem{asci-2009a}
\leavevmode\vrule height 2pt depth -1.6pt width 23pt, {\em Generating uniform
  random vectors in {$Z^k_p$}: the general case}, J. Theoret. Probab., 22
  (2009), pp.~791--809.

\bibitem{AS-2016}
{\sc A.~Ayyer and P.~Singla}, {\em Random motion on finite rings, {I}:
  noncommutative rings}, arXiv preprint arXiv:1605.05089,  (2016).

\bibitem{ayyer-steinberg-2017}
{\sc A.~Ayyer and B.~Steinberg}, {\em Random walks on rings and modules}, arXiv
  preprint arXiv:1708.04223,  (2017).

\bibitem{brown-2000}
{\sc K.~S. Brown}, {\em Semigroups, rings, and {M}arkov chains}, J. Theoret.
  Probab., 13 (2000), pp.~871--938.

\bibitem{chung-diaconis-graham-1987}
{\sc F.~R.~K. Chung, P.~Diaconis, and R.~L. Graham}, {\em Random walks arising
  in random number generation}, Ann. Probab., 15 (1987), pp.~1148--1165.

\bibitem{diaconis-1998}
{\sc P.~Diaconis}, {\em Group representations in probability and statistics},
  Lecture Notes-Monograph Series, 11 (1988), pp.~i--192.

\bibitem{diaconis-shahshahani-1981}
{\sc P.~Diaconis and M.~Shahshahani}, {\em Generating a random permutation with
  random transpositions}, Z. Wahrsch. Verw. Gebiete, 57 (1981), pp.~159--179.

\bibitem{ding-zhou-2007}
{\sc J.~Ding and A.~Zhou}, {\em Eigenvalues of rank-one updated matrices with
  some applications}, Appl. Math. Lett., 20 (2007), pp.~1223--1226.

\bibitem{hildebrand-1993}
{\sc M.~Hildebrand}, {\em Random processes of the form
  {$X_{n+1}=a_nX_n+b_n\pmod p$}}, Ann. Probab., 21 (1993), pp.~710--720.

\bibitem{hildebrand-mccollum-2008}
{\sc M.~Hildebrand and J.~McCollum}, {\em Generating random vectors in {$(\Bbb
  Z/p\Bbb Z)^d$} via an affine random process}, J. Theoret. Probab., 21 (2008),
  pp.~802--811.

\bibitem{LevinPeresWilmer}
{\sc D.~A. Levin, Y.~Peres, and E.~L. Wilmer}, {\em Markov Chains and Mixing
  Times}, American Mathematical Society, Providence, RI, 2009.
\newblock With a chapter by James G. Propp and David B. Wilson.

\bibitem{shapiro-1983}
{\sc I.~Piatetski-Shapiro}, {\em Complex representations of {${\rm
  GL}(2,\,K)$}\ for finite fields {$K$}}, vol.~16 of Contemporary Mathematics,
  American Mathematical Society, Providence, R.I., 1983.

\bibitem{rhodes-schilling-2017}
{\sc J.~Rhodes and A.~Schilling}, {\em Unified theory for finite markov
  chains}, arXiv preprint arXiv:1711.10689,  (2017).

\bibitem{steinberg-MO-2017}
{\sc B.~Steinberg}, {\em In modules over finite rings ${R}v = {R}w \iff
  {R}^\times v = {R}^\times w$}.
\newblock MathOverflow.
\newblock URL:https://mathoverflow.net/q/211739 (version: 2017-06-14).

\bibitem{zelevinsky-1981}
{\sc A.~V. Zelevinsky}, {\em Representations of finite classical groups},
  vol.~869 of Lecture Notes in Mathematics, Springer-Verlag, Berlin-New York,
  1981.
\newblock A Hopf algebra approach.

\end{thebibliography}
\bibliographystyle{siam}

\end{document}